\RequirePackage[warning,log]{snapshot}
\documentclass{amsart}
\usepackage{graphicx}

\newtheorem{theorem}{Theorem}[section]
\newtheorem{lemma}[theorem]{Lemma}
\newtheorem{conj}[theorem]{Conjecture}
\theoremstyle{definition}

\newtheorem{cor}[theorem]{Corollary}

\theoremstyle{remark}
\newtheorem{remark}[theorem]{Remark}

\numberwithin{equation}{section}

\begin{document}

\title{Chebyshev polynomials and a refinement of the local residue/non-residue structure at a prime}
\author{Kok Seng Chua}
\email{chuakkss52@outlook.com}

\subjclass[2000]{Primary : 11A51, Secondary : 11A07, 11A15}



\keywords{}

\begin{abstract} The basic power function $t_n(x)=x^n$ is in some sense a classical limit for large $x$, of the monictised  Chebyshev polynomial of the first kind $T_n(x)/2^{n-1}$. A theorem of Ritt says they are the only two families of polynomials $p_n(x)$ over $\mathbb{C}$ which satisfies the commutativity relation $p_n(p_m(x))=p_m(p_n(x))$. The commutativity $t_n(t_m(x))=t_m(t_n(x))$ is the reason why the RSA scheme allow also digital signature but the Diffie-Hellman key exchange protocol depends only on the commutativity. The DH scheme and many results in elementary local (at a fixed prime) multiplicative number theory is about properties of the power function $t_n(x)$ and they have natural analogue extension  to $T_n(x)$. Recently we discovered a Chebyshev version of Euler's primality criterion , which however depends on two quadratic characters $\epsilon_p(a)=\left ( \frac{a^2-1}{p} \right)$ and $\delta_p(a)=\left( \frac{2(a+1)}{p} \right)$. This gives rise to a local partition of $(\mathbb{Z}/p\mathbb{Z}) \setminus \{ \pm 1 \}$ into  4 disjoint sets $A_{\epsilon \delta}$. This can be thought of as a real refinement of the residue/non-residue as it arise from viewing $T_n(x)$ is the "real" part of the $n$th power of the unit $\omega_x=x+\sqrt{x^2-1}$, namely  $\omega_x^n=T_n(x)+U_{n-1}(x)\sqrt{x^2-1}$. There are obvious analogue of Chebyshev version of pseudoprimes, Wieferich primes, Lucas-Lehmer, AKS, Diffie-Hellman, cyclotomic expansions and probably others.

\end{abstract}
\maketitle

\section{Introduction}  The Chebyshev polynomial of the first kind $T_n(x)$ which one usually thought of as the polynomial which express $\cos n \theta $ in term of $\cos \theta$ ie. $\cos (n \theta)=T_n(\cos \theta)$, is well known to be useful in numerical analysis, since its roots which are all real gives optimal weights for numerical integration. As a polynomial it seems unusual that $T_n(x), T_n(x) \pm 1$ are all totally real which follows from the $\cos \theta$ substitution that $T_n(x)-t$ has only real roots for $|t| \le 1 $ and that when $t=\pm1$, all the roots are double roots, which explain the square factor in the real cyclotomic expansion later.

One can also think of $T_n(x)$ in a more arithmetical way as the "real" part of the $n$th power of the positive unit $\omega_x:=x+\sqrt{x^2-1}$, namely
\begin{align} \omega_x^n=T_n(x)+U_{n-1}(x)\sqrt{x^2-1},\end{align}

and this explains why there is also a Chebyshev polynomial of the second kind $U_{n-1}(x)$ as the "imaginary" part. It also suggests a dual version using instead of $\omega_x$ the negative unit
$\theta_x:=x+\sqrt{x^2+1}$,
\begin{align} \theta_x^n=S_n(x)+V_{n-1}(x)\sqrt{x^2+1}.\end{align}
The dual "bosonic" Chebyshev polynomials $S_n(x),V_n(x)$ are in fact just $T_n(x), U_n(x)$ with all negative coefficients changed to positive and they seems useful for explicit diophantine approximations. For example $V_n(x)$ are found to give explicit solution to a special case of Oppenhiem conjecture (now Margulis theorem). see section below

Since $\omega_x$ is a positive unit, so is $\omega_x^n$ so that its norm gives
\begin{align} \omega_x^n \overline{ \omega_x^n}=T_n^2(x)-U_{n-1}^2(x)(x^2-1)=1, \end{align}
which is the Pell's equation. (1.1) and (1.3) implies another interesting characterization of $T_n(x)$,
\begin{align} \omega_x^n=T_n(x)+\sqrt{T_n^2(x)-1}=\omega_{T_n(x)}
\end{align}
so that we have
\begin{align} \omega_{T_{mn}(x)}=\omega_x^{mn}=\omega_x^{nm}=(\omega_x^n)^m=(\omega_{T_n(x)})^m=\omega_{T_m(T_n(x))}
\end{align}
and this implies $T_m(T_n(x))=T_{mn}(x)$ and hence the compositional commutativity
\begin{align} T_m(T_n(x))=T_{mn}(x)=T_{nm}(x)=T_n(T_m(x)).\end{align}

 A theorem of Ritt states that $T_n(x)$ and the simpler basic power function
 \begin{align} t_n(x)=x^n \end{align}
 are the only two families of polynomials over $\mathbb{C}$ which satisfy the above commutativity (1.6) and they also agree locally
 \begin{align} T_p(x)=t_p(x) \mod p \end{align} at any odd prime $p$. In fact we have for $n>2$,

 \begin{align}
     T_n(x)=x^n \mod n   \Leftrightarrow \text{ n is a prime},
     \end{align}
     which suggests there should be a Chebyshev version of AKS.

 Most of the constructions/results of elementary multiplicative number theory at a fixed prime $p$ are based on properties of basic power function and its commutativity (1.7),(1.6), for example pseudoprimes, AKS, cyclotomic expansion, local quadratic residue/non-residue structure,  Diffie-Hellman key exchange protocol and Artin's primitive root conjecture and all these have a natural Chebyshev version. In particular the commutativity of $t_n(x)$ is the reason why the RSA public-key encryption scheme also allow digital signature \cite{RSA}. However a Chebyshev version of RSA seems impossible since one also need a homomorphism property $t_{n+m}(x)=t_n(x)t_m(x)$ which does not hold for Chebyshev. However one clearly have a Diffie-Hellman key exchange analogue which depends only on the commutativity and also an analogue of AKS \cite{AKS} for prime testing.

 Since $T_n(x)=\frac{\omega_x^n+\overline{\omega_x}^n}{2}=\frac{(x+\sqrt{x^2-1})^n+(x-\sqrt{x^2-1})^n}{2}$ and for large $x$, $\sqrt{x^2-1} \sim x$, we have for large $x$, $T_n(x) \sim (2x)^{n}/2$ so that the monic $T_n(x)/2^{n-1} \sim x^n$ so that in some sense $t_n(x)$ is a "classical limit" of the monictized $T_n(x)$ for large $x$.

  \section{A Chebyshev-Euler primality criteria}

  Our initial observation is that the Lucas Lehmer iteration $s_0=4, s_{n+1}=s_n^2-2$ for testing Mersenne prime can be solved (interpolated) using  Chebyshev polynomial: $s_n=2T_{2^n}(2)$. This leads us naturally to seek similar primality test based on iterating $T_{q^n}(a)$ and note that the commutativity implies one can compute $T_{q^n}(a)$ efficiently as $T_q^n(a)$ where the exponent means compositional iteration of the simpler function $T_q(x)$.

 Lucas-Lehmer is based on the arithmetic generated by $\omega_2=2+\sqrt{3}$. Replacing  $\omega_2$ by $\omega_a=a+\sqrt{a^2-1}$ led us to a Chebyshev version of Euler's primality criteria :"$t_{(p-1)/2}(a)=\left( \frac{a}{p} \right) \mod p$".

 Here the character $\epsilon(a)$ comes from our $q-ary$ extension of  Lucas -Lehmer \cite{C}, based on numerical observation. The second character $\delta(a)$ comes out when one follows the proof of the $2$-ary Lucas-Lehmer necessity condition for primality of Mersenns primes \cite{R}. One just replace $\omega_2=2+\sqrt{3}$ by $\omega_a$ and follow the argument carring the character $\epsilon(a)$ along, and the character $\delta(a)$ appears in the final step just after (2.12) in the proof below.

\begin{theorem}
For an odd prime $p$, and an integer  $$a \in R_p:=(\mathbb{Z}/p)/\{ \pm 1\}=\{0,2,3,...,p-2\},$$ and $\epsilon=\left( \frac{a^2-1}{p} \right)$, $\delta=\left( \frac{2(a+1)}{p} \right) $, we have
\begin{align} \omega_a^{\frac{p-\epsilon}{2}}=\delta, \;\;\; \omega_a^{\frac{p+\epsilon}{2}}=\delta \omega_a^\epsilon \mod p,\end{align}
or equivalently

\begin{align}T_{(p-\epsilon)/2}(a)=\delta,\;\; U_{(p-\epsilon)/2-1}(a)=0 \mod p,\end{align}
\begin{align} T_{(p+\epsilon)/2}(a)=\delta a ,\;\;  U_{(p+\epsilon)/2-1}(a)=\delta \epsilon\mod p \end{align}
$(2.2)$ also implies
\begin{align}
T_{(p-\epsilon)/2}(a)=\delta \mod p^2 .\end{align}
and
\begin{align} T_{(p-\epsilon)/2}(a)=\delta \mod p  \Rightarrow U_{(p-\epsilon)/2-1}(a)=0 \mod p. \end{align}
so that the second condition in $(2.2)$ is unnecessary.

\end{theorem}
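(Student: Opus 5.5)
The plan is to compute in a ring of characteristic $p$ that contains square roots of $a+1$ and $a-1$, use the Frobenius map there, and then descend to $\omega_a$ through the identity
\[\omega_a=\tfrac12\bigl(\sqrt{a+1}+\sqrt{a-1}\bigr)^2 .\]
Let $R=\mathbb{F}_p[X,Y]/(X^2-(a+1),\,Y^2-(a-1))$. Since $a\neq\pm1$, the elements $X$ and $Y$ are units, and $XY$ is a square root of $a^2-1$. So the ring $\mathbb{F}_p[\sqrt{a^2-1}]$, where $\omega_a$ lives, embeds in $R$ via $\sqrt{a^2-1}\mapsto XY$. Put $\beta=X+Y$. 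Then $\beta^2=2a+2XY$, so $\omega_a=\beta^2/2$. Also $(X+Y)(X-Y)=2$. Write $u=\bigl(\frac{a+1}{p}\bigr)$ and $v=\bigl(\frac{a-1}{p}\bigr)$, so that $uv=\epsilon$.

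\textbf{Key step.} Frobenius is a ring endomorphism of any commutative $\mathbb{F}_p$-algebra, and Euler's criterion gives $X^p=X\,(a+1)^{(p-1)/2}=uX$ and $Y^p=vY$. Hence $\beta^p=uX+vY$.
\begin{itemize}
\item If $\epsilon=1$, then $u=v$, so $\beta^p=u\beta$ and $\beta^{p-1}=u$.
\item If $\epsilon=-1$, then $v=-u$, so $\beta^p=u(X-Y)$ and $\beta^{p+1}=u(X+Y)(X-Y)=2u$.
\end{itemize}
In both cases $\omega_a^{(p-\epsilon)/2}=\beta^{p-\epsilon}/2^{(p-\epsilon)/2}$. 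Using $2^{(p-1)/2}=\bigl(\frac{2}{p}\bigr)$:
\begin{itemize}
\item for $\epsilon=1$ this equals $u\bigl(\frac 2p\bigr)$;
\item for $\epsilon=-1$ it equals $2u/2^{(p+1)/2}=u\bigl(\frac2p\bigr)$.
\end{itemize}
Either way the value is $\bigl(\frac{2(a+1)}{p}\bigr)=\delta$. This proves the first identity of (2.1). The second follows by multiplying by $\omega_a^{\epsilon}$, since $(p+\epsilon)/2=(p-\epsilon)/2+\epsilon$.

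\textbf{Component identities (2.2), (2.3).} The ring $\mathbb{F}_p[\sqrt{a^2-1}]=\mathbb{F}_p[Z]/(Z^2-(a^2-1))$ is free with basis $1,\sqrt{a^2-1}$, and (1.1) holds in it. So we can compare coefficients.
\begin{itemize}
\item From $\omega_a^{(p-\epsilon)/2}=\delta$ we read off $T_{(p-\epsilon)/2}(a)=\delta$ and $U_{(p-\epsilon)/2-1}(a)=0$.
\item From $\delta\omega_a^{\epsilon}=\delta a+\delta\epsilon\sqrt{a^2-1}$ we read off (2.3).
\end{itemize}
The converse direction, from (2.2) back to (2.1), is immediate.

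\textbf{The mod $p^2$ statement and the redundancy claim.} Both rest on the Pell identity (1.3) over $\mathbb{Z}$, with $T=T_{(p-\epsilon)/2}(a)$ and $U=U_{(p-\epsilon)/2-1}(a)$.
\begin{itemize}
\item \emph{The mod $p^2$ statement.} Since $p\mid U$, we get $T^2=1+(a^2-1)U^2\equiv1 \pmod{p^2}$. Hence $(T-\delta)(T+\delta)\equiv0\pmod{p^2}$. As $T+\delta\equiv2\delta$ is a unit mod $p$ ($p$ odd), it follows that $T\equiv\delta\pmod{p^2}$.
\item \emph{The redundancy claim.} If only $T\equiv\delta\pmod p$ is known, then $(a^2-1)U^2\equiv T^2-1\equiv0\pmod p$. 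Since $a^2\not\equiv1$, this forces $U\equiv0\pmod p$.
\end{itemize}

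\textbf{Main obstacle.} The delicate point is the key step: choosing the half-power factorization $\omega_a=\beta^2/2$ so that $\delta$ comes out directly. In the split case $\epsilon=1$, $R$ is not a field, so one must check that only ring-level facts are used: Frobenius is a homomorphism, and $X$, $Y$, $2$ are units. The sign bookkeeping in the $\epsilon=-1$ case, via $(X+Y)(X-Y)=2$, also needs care.
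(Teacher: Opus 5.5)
Your proof is correct, and it reaches (2.1) by a different, though closely related, route from the paper's.

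The paper never leaves $\mathbb{F}_p[\sqrt{a^2-1}]$. It applies Frobenius to $\gamma=a-1+\sqrt{a^2-1}$ and uses Euler's criterion only for $a^2-1$, obtaining $\gamma^{p-\epsilon}=(2-2a)^{(1-\epsilon)/2}$. It then uses $\gamma^2=2(a-1)\omega_a$ to get $\omega_a^{(p-\epsilon)/2}=\epsilon\left(\frac{2(a-1)}{p}\right)$. This becomes $\delta$ only after the recombination $\epsilon\left(\frac{a-1}{p}\right)=\left(\frac{a+1}{p}\right)$.

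You instead adjoin $\sqrt{a+1}$ and $\sqrt{a-1}$ separately and apply Frobenius to $\beta=X+Y$, with $\beta^2=2\omega_a$. The two elements are related by $\gamma=Y\beta$. Multiplying your $\beta$ by $\sqrt{a-1}$ is exactly what brings it back into the quadratic ring, at the price of the scalar $2(a-1)$.

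What each route buys:
\begin{itemize}
\item Your version makes the roles of the two characters transparent. $\epsilon$ decides whether Frobenius sends $\beta$ to $u\beta$ or to $2u\beta^{-1}$, and $\delta$ appears directly as $\left(\frac{a+1}{p}\right)\left(\frac{2}{p}\right)$.
\item The paper's version is more economical. It needs no auxiliary four-dimensional algebra and no injectivity check for $\mathbb{F}_p[\sqrt{a^2-1}]\hookrightarrow R$. You do handle that check correctly, via the basis $1,X,Y,XY$.
\end{itemize}
Your component identities and your Pell-equation arguments for (2.4) and (2.5) coincide with the paper's.

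One small correction to your closing remark: $R$ is never a field, not only when $\epsilon=1$. If $\epsilon=-1$, one of $a\pm1$ is a nonzero square $b^2$, which makes $X-b$ or $Y-b$ a zero divisor. Since you use only ring-level facts, this costs nothing.
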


This lead us to a natural local partition of $R_p$ similar to the quadratic residue /nonresidue $Q_{\pm}=\left\{ a \in (\mathbb{Z}/p)^*: \left( \frac{a}{p} \right)= \pm 1 \right\},$ but we now have four sets since we now have two quadratic characters $\epsilon(a), \delta(a)$.

For $\epsilon, \delta= \pm 1$, we  define 4 residue sets
\begin{align} \;\;\;\; A_{\epsilon\delta}:=\left\{ a \in R_p:\epsilon(a):= \left( \frac{a^2-1}{p} \right) = \epsilon, \delta(a):= \left( \frac{2(a+1)}{p} \right)=\delta \right\},\end{align}
whose elements must satisfies the four relations in the theorem. Also we have for any $a \in A_{\epsilon, \delta}$,

$$\left( \frac{a+1}{p} \right)=\left( \frac{2}{p} \right) \delta, \;\; \left( \frac{a-1}{p} \right)=\left( \frac{2}{p} \right)\epsilon \delta ,$$
so that shifts $A_{\epsilon, \delta} \pm 1$ must be all inside $Q_+$ or $Q_-$. Conversely a $\pm 1$ shift of $Q_{ \pm}$ is a essentially union of a "real" $A_{+.\delta}$ and a "unreal"  $A_{-,\delta}$ set,  so this is a refinement of the local quadratic structure. In fact the real $a$ defined by $\epsilon=1$ can be read off from $Q_{\pm} \pm 1$ as elements such that $-a$ also lies in the same $Q_{\pm} \pm 1$ while the "unreal" $a$ are interchanged.

One can always get a partition into four sets using any two quadratic characters but they won't be natural and one can't expect the nice properties here. Note by Theorem 2.1 one also has

\begin{align}A_{\epsilon \delta}=\left\{ a \in R_p : T_{\frac{p-\epsilon}{2}}(a)=\delta \mod p \right\},\end{align}
which is the analogue of $Q_{\pm}=\left\{ a \in \mathbb{Z}/p : t_{(p-1)/2}(a)= \pm 1 \mod p \right\}$.

\subsection{Chebyshev-Wieferich primes}
Theorem (2.1) implies that if $p$ is an odd prime and $a \not= 0,\pm 1 \mod p$, then
$$\omega_a^{(p-\epsilon)/2}= \delta \mod p.$$
This an exact analogue of little Fermat $t_{p-1}(a)=a^{p-1} = 1 \mod p$ so that it is natural to define a prime $p$ to be a Chebyshev-Wieferich prime to the base $a$ if
$$\omega_a^{(p-\epsilon)/2}= \delta \mod p^2.$$

This is equivalent to $T_{(p-\epsilon)/2}(a)=\delta \mod p^2$ and $U_{(p-\epsilon)/2-1}(a)=0 \mod p^2$, but by (2.4), we know the first condition already holds for any prime $p$ so that  $p$ is a Chebyshev-Wieferich prime  if and only if $U_{(p-\epsilon)/2-1}(a)=0 \mod p^2$. Also a Chebyshev-Wieferich prime is automatically a CW-plus prime, namely $U_{(p-\epsilon)/2-1}(a)= 0 \mod p^2$ implies
$$ \omega_a^{\frac{p-\epsilon}{2}}=\delta, \;\;\; \omega_a^{\frac{p+\epsilon}{2}}=\delta \omega_a^\epsilon \mod p^2,$$
Chebyshev-Wieferich primes seem rarer than the classical Wieferich primes. It seems thay are essentially equivalent to Wieferich prime for some Lucas sequence . The table below gives some Chebyshev- Wieferich primesup to $10^8$.
\begin{center}
\begin{tabular}{ l|l}
a & p \\
 \hline
2 & 103 \\
3 & 13,31,1546463 \\
4 & 181,1039,2917,2401457 \\
5 & 7,523 \\
6 & 23, 577, 1325663 \\
7 & 103 \;\;  $7=T_2(2)$ \\
8 & - \\
9 & -\\
10 & - \\
11 & - \\
12 & 5,311,3286453 \\
13 & 5,43, 71 \\
14 & 557 , 19739\\
15 & 6707879, 93140353 \\
16 & 5231 ,6491, 30071 \\
17 & 13, 31,1546463 \;\; $17=T_2(3)$ \\
18 & 11,3533729 \\
 \hline
\end{tabular}
\end{center}
It seems we have seen the prime 103, 1039 before (especially 1039 because it is not quite 1093 as Wieferich primes of probably Lucas 

\subsection{Characterization of $A_{\epsilon, \delta}$ in term of its multiplicative order}

Every $a \in R_p$ has a $\omega_a$ multiplicative order $ord_p(\omega_a)$ which is the least integer $n$ such that $\omega_a^n=1 \mod p$. It follows as is usual that $\omega_a^m=1 \mod p$ for any $m$ implies $ord_p(\omega_a)|m$ . Since $\omega_a^{(p-\epsilon)/2}=\delta= \pm 1 \mod p$, we have $ord_p(\omega_a)|(p-\epsilon)/2$ if $\delta=1$ and $ord_p(\omega_a)|(p-\epsilon)$ if $\delta=-1$. This gives us a most natural way to characterize the $A_{\epsilon, \delta}$ set of union of elements with some order.

By definition, $ord_p(\omega)$ is the least $n$ such that $T_n(a)=1, U_{n-1}(a)=0 \mod p$. But by the Pell's equation $T_n(a)^2-1=(a^2-1)U_{n-1}(a)^2$, $T_n(a)=1 \mod p$ implies $U_{n-1}(a)=0 \mod p$.
$T_n(a)$ also satisfies a second order linear recurrence
$$ T_0(a)=1,T_1(a)=a,T_{n+1}(a)=2aT_n(a)-T_{n-1}(a),$$ so this must have a period $\mod p$ which is the least $n$ such that $T_n(a)=1, T_{n+1}(a)=a \mod p$. However the second condition $T_{n+1}(a)=a \mod p$ is again automatically implied because of the identity
$$T_{n+1}(x)=xT_{n}(x)-(1-x^2)U_{n-1}(x).$$
So the multiplicative order always equals the additive period, both given by just the least $n$ with $T_n(a)=1$.

 For example $0$ always has multiplicative order 4 since $\omega_o=\sqrt{-1}$ and the recurrence sequence is $T_1(0)=0,T_2(0)=-1, T_3(0)=0,T_4(0)=1$. Note also $\epsilon(0)=\left( \frac{-1}{p} \right), \delta(0)=\left( \frac{2}{p} \right)$. So $0$ is real if and only if $p=1 \mod 4$.
\begin{theorem}
If we let $I_d$ be the elements in $R_p$ of order/period $d$, we have
\begin{align} A_{\epsilon +}=\cup_{\substack{d|(p-\epsilon)/2 \\ d>2}} I_d,\;\;\; A_{\epsilon -}= \cup_{\substack{d|(p-\epsilon)\\ d \not | (p-\epsilon)/2 \\d>2} }I_d,\end{align}
and $I_d$ consists exactly of the $\phi(d)/2$  roots of $\Phi^+_d(2x) \mod p$, where $\Phi_d^+(x)$ is the polynomial of the maximal real cyclotomic subfield of $\mathbb{Q}(e^{2 \pi i/d})$.
\end{theorem}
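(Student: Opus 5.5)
We prove the two halves of the statement separately. The first half is the order decomposition of $A_{\epsilon\pm}$. The second half identifies $I_d$ with the roots of $\Phi_d^+(2x)$.

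\textbf{Step 1: the order decomposition.} By Theorem 2.1, for $a\in R_p$ we have $\omega_a^{(p-\epsilon)/2}=\delta$, computed in $\mathbb{F}_p(\sqrt{a^2-1})$. This field is $\mathbb{F}_p$ when $\epsilon=1$ and $\mathbb{F}_{p^2}$ when $\epsilon=-1$ (for $a=0$ read $\epsilon=(\frac{-1}{p})$). Hence $d:=ord_p(\omega_a)$ always divides $p-\epsilon$, and:
\begin{itemize}
\item if $\delta=1$, then $d\mid (p-\epsilon)/2$;
\item if $\delta=-1$, then $\omega_a^{(p-\epsilon)/2}=-1\neq 1$, so $d\nmid (p-\epsilon)/2$ but $d\mid p-\epsilon$.
\end{itemize}
We also need $d>2$. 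Indeed $\omega_a=\pm1$ forces $a=T_1(a)=\pm1$, which is excluded. Likewise $\omega_a^2=1$ gives $T_2(a)=2a^2-1=1$ and $U_1(a)=2a=0$, so $a=0$ and then $2a^2-1=-1\neq1$, a contradiction.

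This shows $A_{\epsilon\delta}$ is contained in the stated union of the $I_d$. Conversely, every $a$ lies in exactly one $A_{\epsilon\delta}$, and the divisibility conditions for $\delta=\pm1$ are mutually exclusive. So if $a\in I_d$ with $d$ in the $\delta=+$ range for $\epsilon(a)$, then $a\in A_{\epsilon+}$. One point needs care: the index sets for $\epsilon=1$ and $\epsilon=-1$ can overlap (for example $d=4$ may divide both $p-1$ and $p+1$ only in trivial cases). I will note that an element of order $d$ with $d\mid p-\epsilon'$ and $d>2$ forces $\epsilon(a)=\epsilon'$. The reason is that an element of order $d>2$ dividing $p-1$ lies in $\mathbb{F}_p^*$, whereas for $d\mid p+1$ with $d\nmid p-1$ it lies in the norm-one subgroup of $\mathbb{F}_{p^2}^*$ outside $\mathbb{F}_p$. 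Since $\gcd(p-1,p+1)=2$ and $d>2$, there is no ambiguity, so the union as stated is a disjoint decomposition.

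\textbf{Step 2: $I_d$ is the root set of $\Phi_d^+(2x)$.} Write $\zeta=\omega_a$. Since $\zeta\bar\zeta=1$ by (1.3), we have $2a=\zeta+\zeta^{-1}$. Now $a\in I_d$ exactly when $\zeta$ is a primitive $d$-th root of unity in $\overline{\mathbb{F}}_p$. We have $p\nmid d$ because $d\mid p\mp1$. Recall the defining relation $\Phi_d(z)=z^{\phi(d)/2}\Phi_d^+(z+z^{-1})$, with $\Phi_d^+$ of degree $\phi(d)/2$ and integer coefficients.

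Reducing mod $p$, and using that $\Phi_d$ is separable mod $p$ (as $p\nmid d$), the roots of $\Phi_d^+$ mod $p$ are exactly the $\phi(d)/2$ distinct values $\zeta+\zeta^{-1}$ with $\zeta$ a primitive $d$-th root. They are distinct because $\zeta+\zeta^{-1}=\eta+\eta^{-1}$ forces $\eta=\zeta^{\pm1}$. The root of $\Phi_d^+(2x)$ corresponding to $\zeta$ is $x=(\zeta+\zeta^{-1})/2$.

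It remains to check that $x$ lies in $\mathbb{F}_p$. This holds because $d\mid p\mp1$: then $\zeta^p=\zeta^{\pm1}$, so $x^p=x$. Moreover $\omega_x=\zeta^{\pm1}$, since $\omega_x$ is a root of $z^2-2xz+1$. Hence $x\in I_d$, and conversely every element of $I_d$ arises this way. This gives $|I_d|=\phi(d)/2$ whenever $d\mid p\pm1$ and $d>2$.

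\textbf{Main obstacle.} The main obstacle is the bookkeeping in Step 1: making the index sets consistent with $\epsilon(a)$, and treating $a=0$ and the well-definedness of $\omega_a$ in the quadratic extension. I would handle both using $\gcd(p-1,p+1)=2$ together with the order-$>2$ argument.
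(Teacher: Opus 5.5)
Your proof is correct, and it reaches the identification of $I_d$ by a somewhat different route from the paper's. Step 1 is essentially the paper's argument: the divisibility of $ord_p(\omega_a)$ is read off from $\omega_a^{(p-\epsilon)/2}=\delta$ in Theorem 2.1. You make it more careful by checking $d>2$ and by checking that $d$ determines $\epsilon(a)$. The cleanest form of the latter is this: Step 1 already gives $d\mid p-\epsilon(a)$, so $d\mid p-\epsilon'$ with $d>2$ forces $\epsilon'=\epsilon(a)$, because $\gcd(p-1,p+1)=2$. Your remark that the index sets ``can overlap'' (e.g.\ for $d=4$) is a slip, since for $d>2$ they never do.

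The genuine difference is in identifying $I_d$. The paper stays on the $x$-line. It uses that the order is the least $n$ with $T_n(a)=1$, together with the Chebyshev--cyclotomic factorization $T_n(x)-1=\prod_{d\mid n}\Psi_d(x)$, $\Psi_d(x)=\Phi_d^+(2x)^2$ (equivalently, the refined factorizations of $T_{(p-\epsilon)/2}(x)\mp 1$). You instead lift to $\overline{\mathbb{F}}_p$, regard $\omega_a$ as a root of unity, and use $\Phi_d(z)=z^{\phi(d)/2}\Phi_d^+(z+z^{-1})$.

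These are the same identity under $x=(z+z^{-1})/2$, but your version makes explicit three things the paper leaves implicit:
\begin{itemize}
\item separability mod $p$ for $p$ not dividing $d$ (a factorization over $\mathbb{Z}$ alone does not guarantee that roots of distinct $\Psi_d$ stay distinct mod $p$);
\item the distinctness of the $\phi(d)/2$ roots;
\item their rationality over $\mathbb{F}_p$, via $\zeta^p=\zeta^{\pm 1}$.
\end{itemize}
It also shows that your restriction to $d\mid p\pm 1$ is genuinely necessary. For example, $\Phi_{4p}^+(y)\equiv y^{p-1}\pmod p$, whose root $x=0$ lies in $I_4$, not $I_{4p}$.

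The paper's route, in return, never leaves $\mathbb{F}_p[x]$ and avoids the quadratic extension. It also yields the polynomial factorizations of $T_{(p-\epsilon)/2}(x)\mp 1$ over the sets $A_{\epsilon\pm}$ along the way.
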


\begin{remark} An immediate consequence of our characterization of $I_d$ is that for $d|(p \pm 1)$, the polynomials $\Phi_d^+(x)$ must split completely mod $p$. In fact these are the only primes for which $\Phi_d^+(x)$ split completely mod $p$. This is analogues to the classical case : the cyclotomic polynomial $\Phi_d(x)$ split completely mod $p$ if and only if $d|(p-1)$ and the $\phi(d)$ roots of $\Phi_d(x) \mod p$ are exactly the elements in $(\mathbb{Z}/p)^*$ with multiplicative order $ord_p(a)=d$.
\end{remark}
\begin{remark}
(2.2) is an analogue of the fact that the quadratic non-residue are the elements in $(\mathbb{Z}/p)^*$ whose order $ord_p(a)$ in $\mathbb{Z}^*$ does not divide $(p-1)/2$. One main difference is that for a fixed $p$, the group $(\mathbb{Z}/p)^*$ is the same for all $a$ in the classical case, where as the ring $\mathbb{Z}/p[\sqrt{a^2-1}]$ varies with $a$ so that $R_p$ behave more like a "moduli space" of the quadratic orders $\mod p$.
\end{remark}

\subsection{Proof of theorem 2.1} We have computing mod $p$, by Euler’s criterion
\begin{align}
(a-1+\sqrt{a^2-1})^p=(a-1)+(a^2-1)^{(p-1)/2}\sqrt{a^2-1}=a-1+\epsilon \sqrt{a^2-1}
\end{align}
Multiplying by $a-1-\epsilon\sqrt{a^2-1}$ gives (for $\epsilon= \pm 1$)
\begin{align} (2-2a)^{(1-\epsilon)/2}=(a-1+\sqrt{a^2-1})^{p-\epsilon}\end{align}
Now use $( a-1+\sqrt{a^2-1})^2=2(a-1)\omega_a$ to get
\begin{align}(2(a-1)\omega_a)^{(p-\epsilon)/2}=(2-2a)^{(1-\epsilon)/2}=(-1)^{(1-\epsilon)/2}(2a-2)^{(1-\epsilon)/2} \end{align}
Now we note that $(-1)^{(1-\epsilon)/2}=\epsilon$, for $\epsilon= \pm 1$ so that
\begin{align} (2(a-1))^{(p-1)/2}\omega_a^{(p-\epsilon)/2}=\epsilon=\left( \frac{a^2-1}{p}\right)\end{align}
and this gives us
$$(2.1a)\;\;\; \omega_a^{(p-\epsilon)/2}= \left( \frac{2(a+1)}{p} \right):=\delta,$$
and is how we are lead to the character $\delta$.

(2.2) and (2.3) follows from (2.1) by taking “real and imaginary” part in $\mathbb{Z}[\omega_a].$

We also have with $n=(p-\epsilon)/2$ and the Pell's equation (1.3),
we have $ p|U_{n-1}(a)$ implies
$p^2|T_n(a)^2-1=(T_n(a)-\delta)(T_n(a)+\delta)$
but p does not divide $T_n(a)+\delta$, so (2.2) implies (2.4).

Also by (1.3) $T_n(a)=\delta \mod p$ implies $U_{n-1}(a)=0 \mod p$ since $a \not= \pm 1$.

$\square$

\section{Some Chebyshev analogue of classical elementary number theory}
Most part of elementary number theory is based on property of the power function $t_n(x)=x^n$. There is  often a Chebyshev version replacing $t_n(x)$ by $T_n(x)$, which may give some hint on the original problem.

For any $a \in R_p$, we have $\omega_a^{(p-\epsilon)/2}=\delta$ so that $\omega_a^{p-\epsilon}=1$ just like $t_{p-1}(a)=1 \mod p$.

We will say that $a$ is a Chebyshev quadratic residue mod $p$ if $\delta=1$ and a non-residue if $\delta=-1$. Also $a$ is real mod $p$ if $\epsilon=1$ and non-real otherwise.

\subsection{Chebyshev pseudoprimes} Since $T_p(a)=t_p(a)=a \mod p$ \cite{MTW} called a odd  composite integer $n$ a weak Chebyshev pseudoprime to the base $a$ if $T_n(a) = a \mod n$ . This is a weak notion and we only mention this because the Ramanujan taxi-cab number $n=1729$ can be characterized as the least composite which is a simultaneous Fermat and weak Chebyshev pseudoprime to the base $2$ ie, the least composite $n$ with $n$ dividing both  $2^n-2$ and $T_n(2)-2$. $1729=7 \cdot 13 \cdot 19$ is also the 3rd smallest Carmichael number.

If $p$ is a prime and $a$ an integer with $gcd(a^2-1,p)=1$, by Theorem 2.1, we have
\begin{align}T_{\frac{p-\epsilon(a)}{2}}(a)=\left( \frac{2(a+1)}{p} \right),\;\; U_{\frac{p-\epsilon(a)}{2}-1}(a)=0\;\;\; mod \;\;p.
\end{align}

We shall now called an odd non-prime integer $n$, with $gcd(n,a^2-1)=1$, which pass (3.1) a Chebyshev pseudoprime to the base $a$. It depends only on $a$ mod $n$ but there is no subgroup structure. Chebyshev pseudoprimes are always squarefree except for some prime squared.They are rare and seems rarer than Fermat pseudoprimes. There are only seven of them to the base $2$ upto $20000$,
$$23.43,\;\;\;37.73,\;\;\;103^2,\;\;\;61.181,\;\;\;5.7.443,\;\;\;97.193,\;\;\;31.607.$$
 Is there a Chebyshev pseudoprime to every base mod $n$ ? A Sierpi\'nski number  is a positive odd integer $k$ such that $N_n=k2^n+1$ is composite for every $n \ge 1$. $k_0=78557$ is the smallest known Sierpi\'nski number, because every $N_n=k_02^n+1$ is divisible by one of $\{3, 5, 7, 13, 19, 37, 73\}$. It may be possible that $N_n$ fail a Chebyshev test for every $n$ for some $a$. Since $N_n=1$ mod $8$ for $n \ge 3$. We get $\epsilon=\delta=1$ if we pick $a=3$. So if $s_0=3, s_{k+1}=2s_k^2-1$, and $N_n^2=(k2^n+1)^2$ does not divide $T_k(s_{n-1})-1$ for every $n \ge 3$, then $k$ is Sierpi\'nski. Note $N_{n+1}=2N_n-1$.
It is open if any of the following five numbers $21181, 22699, 24737, 55459, 67607$ is Sierpi\'nski.

A Chebyshev pseudoprime for the base $a$ is  also a weak Chebyshev pseudoprime  since the condition on $U$ means $\omega^{n-\epsilon}=1$ or $\omega^n=\omega^\epsilon$ and taking trace gives $T_n(a)=T_1(a)=a$ mod $n$.

Note that a square-free $n$ which pass the $T$ test will also pass the $U$ test.[Proof: We have $T_{p-\epsilon}(a)=1$ so that
$(\omega^{(n-\epsilon)/2}-\overline{\omega}^{(n-\epsilon)/2})^2=0$ and squarefeeness of $n$ implies $U_{(n-\epsilon)/2-1}(a)=0$.]
There are many non square-free integers which pass the $T$ test but the only non squarefree integer which can pass both tests are square of prime. So  the second part only serve to rule out non squarefree integer and this is relevant since there is no known efficient  algorithm to detect squarefreness. However we can always rule out perfect square as input

If $(n-\epsilon)/2=2^tQ_1$ is even , we can look at the profile
$$[T_{Q_1}(a), T_{2Q_1}(a),T_{2^2Q_1}(a),...,T_{(n-\epsilon)/2}(a)] \mod n$$
 as in the strong Fermat pseudoprime test. Since $T_2(x)=2x^2-1$,if  there is a $1$ not preceded by $\pm 1$ or a $-1$ not preceded by $0$,
$n$ cannot be prime. For the seven pseudoprimes above, the profiles are
$$[1],[0,-1],[9083,0,-1,1],[0,-1,1,1,1],[8416,4431,8861,1],$$
$$[14063,17370,18527,387,1],[18791,1301,18720,0,-1,1],$$
so the strong test rule out $5.7.443$ and $97.193$ as primes. For square free $n$, $-1$ is always preceded by $0$, since $(\omega^m+\overline{\omega}^m)/2=-1$ implies $(\omega^{m/2}+\overline{\omega}^{m/2})^2=0$ mod $n$.

We note that for $a \neq 0 \pm 1$, $(\omega_a,\overline{\omega}_a)$ forms a Lucas pair in the sense of \cite{BHV}, since
$\frac{\omega_a}{\overline{\omega}_a}$ is not a root of unity. The associated Lucas number $u_n(\omega_a,\overline{\omega}_a)=U_{n-1}(a)$. It seems to follow from \cite{BHV} that for every $n>1$, $U_n(a)$ has a primitive divisor, ie. there is a prime $p$ which divides $U_n(a)$ but not $a(a^2-1)U_0(a)...U_{n-1}(a)$.

\subsection{Chebyshev-Artin primitive root conjecture} Artin primitive-root conjecture states that an integer $a \not= -1$ and not a perfect square is a primitive root mod $p$ for infinitely many $p$. A Chebyshev primitive root is an element in $a \in R_p$ with $\omega_a$ order $p-\epsilon$. Just like $t_2(x)=x^2$ are perfect square, a Chebyshev square is an integer of form $a=T_2(x)=2x^2-1$. Chebyshev squares are always residue since $$\omega_{T_2(x)}^{(p-\epsilon)/2}=(\omega_x^2)^{(p-\epsilon)/2}=\omega_x^{p-\epsilon}=1$$ so they cannot be primitive.

Note Chebyshev squares are $7,17,31,49,71,97,127,...$. $0$ is never a primitive root since it always has order $4$. It plays the same role as $-1$ in the classical case which always has order two. The analogue of Artin's conjecture is

\begin{conj} Every integer $a \not= 0, \pm 1, 2n^2+1$, and  not a Chebyshev squares is a Chebyshev primitive root mod $p$ for infinitely many $p$.
\end{conj}
We compute that such $a$ actually has primitive roots and list the least $p$ so that $a$ is a  (both real/unreal) primitive root mod $p$.
For $a=3,9$ and in general $a=2n^2+1,core(a^2-1)=core(2(a+1))=n^2+1)$ we have $\epsilon(a)=\delta(a)$ so that there are no real ($\epsilon=1$) primitive root as this implies $a$ is a residue $\delta=1$.
Numerical computations shows there are always many such primitive roots. One should be able to prove a density estimates based on probabilistic argument as Artin did and the one later adjustment. Is the least primitive roots still $o(p)$ and can the bound still be improved assuming GRH ?
\begin{center}
\begin{tabular}{ l|l|l l|}
a & p-1 & p+1 \\
 \hline
2 & 11 & 7 \\
3 & - & 3 \\
4 & 7 & 19 \\
5 & 5 & 7 \\
6 & 17 & 3 \\
8 & 19 & 5 \\
9 & - & 3 \\
10 & 5 & 17 \\
 \hline
\end{tabular}
\end{center}
Maybe we should define a primitive root to be an $a$ with $\omega_a$ order $p+1$.

\subsection{Chebyshev cyclotomic expansion} One of the most useful algebraic identity is $t_{2n}(x)-1=(t_n(x)-1)(t_n(x)+1)$. This still holds for Chebyshev because $T_2(x)=2x^2-1$, so that
$$T_{2n}(x)-1=T_2(T_n(x))-1=2T_n(x)^2-2=2(T_n(x)-1)(T_n(x)+1)$$
This is just a special case of a more general version of the Chebyshev version of the cyclotomic expansion
$$t_n(x)-1=\prod_{d|n} \Phi_d(x),$$
where $\Phi_d(x)$ is the irreducible cyclotomic polynomial. We have
\begin{theorem} (Chebyshev-cyclotomic expansion) Let $\Psi_1(x)=x-1, \Psi_2(x)=2(x+1), \Psi_d(x)=\Phi_d^+(2x)^2,$
$$T_n(x)-1=\prod_{d|n} \Psi_d(x)$$
Note the roots of $\Psi_d(x)$ are primitive $d$th roots of $T_d(x)-1$, those which are not also roots of $T_k(x)-1,k<d$.

A corollary is this
\begin{lemma} Let $a \not =0, \pm 1, n \ge 1$ if $T_n(a) \not= \pm 1,$ then $\epsilon(T_n(a))=\epsilon(a)$. So if $a$ is real/unreal then so is all the $T_n(a)$. Also we have $\delta(T_n(a))=1$ if $n$ is even and $\delta(T_n(a))=\delta(a)$ for odd $n$. So $\delta(a)=1$ implies $\delta(T_n(a))=1$ for all $n$. $\delta(a)=-1$ implies $\delta(T_n(a))$ alternate in sign.
\end{lemma}

\begin{proof} We have
$$T_n(x)^2-1=\frac{1}{2}(T_{2n}(x)-1)=\frac{1}{2} \Psi_1(x)\Psi_2(x)\prod_{d|2n, d>2}\Phi_d^+(2x)^2,$$
so that $\left( \frac{T_n(a)^2-1}{p} \right)=\left( \frac{a^2-1}{p} \right)$. This also follows from $T_n(x)^2-1=(x^2-1)U_{n-1}(x)^2$ from the Pell's equation. For $\delta$ we have for $n=2^mt$
$$T_n(x)+1=\frac{1}{2}\frac{T_{2n}(x)-1}{T_n(x)-1}=\frac{1}{2} \prod_{d|t} \Psi_{2^{m+1}d}(x)$$
So for odd $n$, $T_n(x)+1=(x+1)g(x)^2$. For even $n$, $2(T_n(x)+1)=h(x)^2$.

\end{proof}
\end{theorem}

There ia also a refined version of the cyclotomic expansion, since
$$A_{\epsilon \pm}=\left\{ a \in R_p: T_{(p-\epsilon)/2}(a) = \pm1 \right\},$$
(2.8) above is equivalent to for $\epsilon= \pm 1$

$$T_{(p-\epsilon)/2}(x)-1=(x-1)(2x+2)^\lambda  \prod_{\substack{d >2 \\ d|(p-\epsilon)/2} }  \Phi^+_d(2x)^2$$
$$T_{(p-\epsilon)/2}(x)+1=((x+1)/2)^\lambda  \prod_{\substack{d >2 \\ d|(p-\epsilon)\\ d \not|(p-\epsilon)/2} }  \Phi^+_d(2x)^2,$$

\subsection{Commutativity and Chebyshev-Diffie-Hellman key exchange protocol}
Recall that a public key encryption scheme \cite{DH} use a matching pair of keys $(d,e)$. Every one can encrypt a message $x$ using the public key $e$ and an encryption algorithm $E_e(x)$ but only the holder of the secret key $d$ can decrypt by computing $D_d(E_e(x))=x$. The first and still the most successful implementation of such a PK system is the RSA \cite{RSA} scheme which use the same algorithm for $D$ and $E$ ie. $E_e(x)=t_e(x) \mod m$, $D_d(x)=t_d(x) \mod m$ where key pairs satisfies $de=1 \mod \phi(m)$ 

 Because of the commutativity $t_d(t_e(x))=t_e(t_d(x))$ we have $$E_e(D_d(x)=D_d(E_e(x))=x.$$ This means the holder of the secret key can "sign" a $x$ now thought of as a document by decrypting it to produce a signature $s=D_e(x)$ which the public can verify its authenticity by encrypting to recover the original document $x=E_e(s)=E_e(D_d(x))$. Since only the holder of the secret key can produce a signature which can be encrypted/verified to recover a meaningful $x$, this prove he/she must be the real signer. So the commutativity is the reason why RSA enable also digital signature. However it seems not possible to construct a Chebyshev-RSA scheme since we also need a homomorphism property $t_{n+m}(x)=t_n(x)t_m(x)$ which is not available for Chebyshev. Even though the above is well-known material, we cannot resist writing about it to point out the fact that commutativity is the reason that made digital signature possible and this seems to be still the most beautiful application of number theory.

However the Diffie-Hellman key exchange protocal uses only the commutativity property and seemingly the Chebyshev discrete logarithm problem : finding $n$ which satisfies $T_n(a)=m \mod p$ for a large prime $p$, given $a,m$ is also hard.

Recall the classical Diffie-Hellman key exchange protocol. A,B  choose a large prime $p$ and a primitive root $g$. A generate random key $a$ and B generate random key $b$. A compute $g_a:=t_a(g) \mod p$ and send to B while B compute $g_b :=t_b(g) \mod p$ and send to A. Now A and B share a common key $g_{ab}=g_{ba}$ which A compute as $g_{ab}=t_a(g_b) \mod p$ and  B computes as $g_{ba}=t_b(g_a) \mod p$. They now have a shared common key because of the commutativity
$$g_{ab}=t_a(t_b(g))=t_b(t_a(g)=g_{ba}.$$
Obviously we have a Chebyshev -version because we still have the commutativity $T_a(T_b(g))=T_b(T_a(g))$ and by Ritt theorem, this is the only two  possibilities. We just replace all the $t_a(g)$ above by $T_a(g)$ for a Chebyshev primitive root $g$. However it is necessary to compute $T_a(g) \mod m$ for very large $a$ and $m$ efficiently. The obvious relation $\omega_{a}^{n+1}=\omega_a \omega_a^n$ gives us the recursion :

\begin{equation}
\begin{pmatrix} T_{n+1}(a) \cr U_{n}(a) \end{pmatrix}= \begin{pmatrix} a & a^2-1 \cr a & 1 \end{pmatrix} \begin{pmatrix} T_n(a) \cr U_{n-1}(a) \end{pmatrix}=
     \begin{pmatrix} a & a^2-1 \cr 1 & a \end{pmatrix}^{n} \begin{pmatrix} a \cr 1 \end{pmatrix}\;\; mod \;\;\; m.
\end{equation}
which allow us to compute this using the usual binary exponentiation and the mod $m$ ensure we only work with number no bigger than $m$.
Also we can find Chebsyshev primitive root in random polynomial time.

However the set  $ \{ T_n(a) \mod p:n=1,2.3...\}$ is always all real or all unreal since $\epsilon(a)=\epsilon(T_n(a))$. So unlike the classical case,  a Chebyshev primitive root only generate half the set $R_p$  so this scheme may not be so useful.

\subsection{Square-root cancellations in exponential sums}
Gauss famously evaluated the quadratic Gauss sum which give square-root cancelation over the set of residues. It is natural to consider if  exponential sum over our $A_{\epsilon\delta}$ gives similar cancellation over the smaller sets. For a subset $A \subset \mathbb{F}_p$, we define

\begin{align} g_A:=\sum_{a \in A} \zeta^a,\;\;\;\; \zeta=e^{2 \pi i/p} \end{align}

Let $R,N$ be the residue/non-residue mod $p$ and also $g_{\epsilon, \delta}$ for $g_{A_{\epsilon, \delta}}$. Gauss's evaluation of the quadratic Gauss sum $\sum_{a=1}^{p-1} \left( \frac{a}{p} \right)= \epsilon_p \sqrt{p}$ is equivalent to
\begin{align} g_R=\frac{-1+\epsilon_p \sqrt{p}}{2},\;\;\; g_N=\frac{-1-\epsilon_p\sqrt{p}}{2},\;\;\; \epsilon_p=
\begin{cases} 1 & p=1 \mod 4 \\ i & p=3 \mod 4  \end{cases}
\end{align}

which allows explicit evaluation the sum or difference of two $A_{\epsilon \delta}$ for example

\begin{lemma}

$$
g_{+-}-g_{++}=
\begin{cases}   (1- \left( \frac{2}{p} \right)\sqrt{p}) \cos \frac{2 \pi}{p}, \;\; p=1 \mod 4 \\
i  \left( \sin \frac{2 \pi}{p}-\left( \frac{2}{p} \right) \sqrt{p} \cos \frac{2 \pi}{p}   \right), \;\; p=3 \mod 4

\end{cases}
$$

$$
g_{--}-g_{-+}=
\begin{cases}   i(1+ \left( \frac{2}{p} \right) \sqrt{p})\sin \frac{2 \pi}{p}, \;\; p=1 \mod 4 \\
 \cos \frac{2 \pi}{p}-\left( \frac{2}{p} \right) \sqrt{p} \sin \frac{2 \pi}{p}   , \;\; p=3 \mod 4

\end{cases}
$$

\end{lemma}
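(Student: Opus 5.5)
The plan is to express the indicator functions of the sets $A_{\epsilon\delta}$ through Legendre symbols, so that the two differences collapse to shifted quadratic Gauss sums. These can then be evaluated by (3.4), i.e. $G:=\sum_{b \in \mathbb{F}_p}\left(\frac{b}{p}\right)\zeta^b=\epsilon_p\sqrt{p}$.

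Write $\chi(\cdot)=\left(\frac{\cdot}{p}\right)$. For $a\in R_p$ (so $a\neq \pm1$, and $a=0$ is allowed), the indicator of $A_{\epsilon\delta}$ is
$$\tfrac14\bigl(1+\epsilon\chi(a^2-1)\bigr)\bigl(1+\delta\chi(2(a+1))\bigr).$$
Subtracting the $\delta=+1$ and $\delta=-1$ indicators for a fixed $\epsilon$ gives
$$g_{\epsilon -}-g_{\epsilon +}=-\frac{\chi(2)}{2}\sum_{a\neq\pm1}\bigl(1+\epsilon\chi(a-1)\chi(a+1)\bigr)\chi(a+1)\zeta^a .$$
Since $\chi(a+1)^2=1$ for $a\neq -1$, this becomes
$$-\frac{\chi(2)}{2}\Bigl[\sum_{a\neq\pm1}\chi(a+1)\zeta^a+\epsilon\sum_{a\neq\pm1}\chi(a-1)\zeta^a\Bigr].$$

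Next I complete each sum over all of $\mathbb{F}_p$ by adding back the excluded terms. The term $a=-1$ contributes $0$ in the first sum and $a=1$ contributes $0$ in the second, so only one correction term is needed in each. Substituting $b=a\pm1$ gives
$$\sum_{a\neq\pm1}\chi(a+1)\zeta^a=\zeta^{-1}G-\chi(2)\zeta,\qquad \sum_{a\neq\pm1}\chi(a-1)\zeta^a=\zeta G-\chi(-2)\zeta^{-1}.$$
Plugging these in, and using $\chi(2)^2=1$ and $\chi(-2)\chi(2)=\chi(-1)$, gives
$$g_{+-}-g_{++}=-\chi(2)\,G\cos\tfrac{2\pi}{p}+\tfrac12\bigl(\zeta+\chi(-1)\zeta^{-1}\bigr),$$
$$g_{--}-g_{-+}=\chi(2)\,G\,i\sin\tfrac{2\pi}{p}+\tfrac12\bigl(\zeta-\chi(-1)\zeta^{-1}\bigr).$$

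Finally I split into the cases $p\equiv1$ and $p\equiv3 \bmod 4$. For $p\equiv 1 \bmod 4$ we have $\chi(-1)=1$ and $G=\sqrt p$. The boundary terms are then $\cos\frac{2\pi}{p}$ and $i\sin\frac{2\pi}{p}$ respectively, which give $(1-\chi(2)\sqrt p)\cos\frac{2\pi}{p}$ and $i(1+\chi(2)\sqrt p)\sin\frac{2\pi}{p}$. For $p\equiv3 \bmod 4$ we have $\chi(-1)=-1$ and $G=i\sqrt p$. The boundary terms swap to $i\sin\frac{2\pi}{p}$ and $\cos\frac{2\pi}{p}$, and $i\cdot i\sqrt p=-\sqrt p$ produces the stated real form of the second identity.

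There is no genuine obstacle here. The only delicate point is bookkeeping: keeping track of exactly which boundary terms ($a=\pm1$, and the vanishing $\chi(0)$ terms) are removed when passing from $R_p$ to the full Gauss sum, and getting the signs $\chi(2)$ and $\chi(-2)$ right. I would double-check these against a small prime such as $p=5$ or $p=7$.
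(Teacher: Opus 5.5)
Your proof is correct and follows the paper's own route. You expand the indicator of $A_{\epsilon\delta}$ as $\frac14(1+\epsilon\,\epsilon(a))(1+\delta\,\delta(a))$, which is the paper's ``trick''; the difference $g_{\epsilon-}-g_{\epsilon+}$ then loses both the $\sum_{a\in R_p}\zeta^a$ term and the sum containing $S$, leaving only the shifted Gauss sums $\sum_{a\in R_p}\left(\frac{a\pm1}{p}\right)\zeta^a$, which you evaluate exactly as in the paper's subsequent lemma, and your case analysis for $p\equiv 1,3 \bmod 4$ checks out (e.g.\ against $p=5$ and $p=7$).
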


This gives square root cancelation over a set of size about $p/2$ but we still have square root cancelation over each of the smaller $A_{\epsilon  \delta}$ set of size $p/4$. Note first that summing over $R_p$ is summing over $\{0,2,3,...,p-2\}$ so we have
\begin{lemma}
$$ \sum_{a \in R_p} \zeta^a= -2 \cos \frac{2 \pi}{p},$$
$$ \sum_a \left( \frac{a-1}{p} \right)\zeta^a =\epsilon_p \sqrt{p}\zeta-\left( \frac{-2}{p} \right) \zeta^{-1},$$
$$  \sum_a \left( \frac{a+1}{p} \right) \zeta^a=\epsilon_p \sqrt{p}\zeta^{-1}-\left( \frac{2}{p} \right) \zeta$$
$$ \sum_a \left( \frac{a^2-1}{p} \right) \zeta^a=\left( \frac{-1}{p} \right)+S, \;\;\; S=\sum_{a=1}^{p-1} \left( \frac{a^2-1}{p} \right) \zeta^a$$
\end{lemma}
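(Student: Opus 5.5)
All four identities come from one device. First extend each sum from $R_p=\{0,2,3,\dots,p-2\}$ to all residues $a \in \mathbb{Z}/p$. The only excluded residues are $a=\pm 1$, and their contributions $\zeta^{\pm 1}$ (possibly weighted by a Legendre symbol) are added back and subtracted. Then evaluate the complete sums, either by orthogonality or by a shift of variable that reduces them to the quadratic Gauss sum $\sum_{b} \left( \frac{b}{p} \right)\zeta^b=\epsilon_p\sqrt{p}$. This is the normalization equivalent to (2.14), with the convention that the $b=0$ term vanishes.

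For the first identity, $\sum_{a \bmod p}\zeta^a=0$ since $\zeta\neq 1$ is a $p$th root of unity. Hence
$$\sum_{a\in R_p}\zeta^a=-\zeta-\zeta^{-1}=-2\cos\frac{2\pi}{p}.$$

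For the second, substitute $b=a-1$ in the complete sum:
$$\sum_{a \bmod p}\left( \frac{a-1}{p} \right)\zeta^a=\zeta\sum_b\left( \frac{b}{p} \right)\zeta^b=\epsilon_p\sqrt{p}\,\zeta .$$
Of the two omitted residues, $a=1$ contributes $0$ and $a=-1$ contributes $\left( \frac{-2}{p} \right)\zeta^{-1}$. Subtracting the latter gives the stated formula.

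The third identity is symmetric. Put $b=a+1$, which produces the factor $\zeta^{-1}$. Now $a=-1$ contributes $0$, and we subtract the $a=1$ term $\left( \frac{2}{p} \right)\zeta$.

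For the fourth, note that $\left( \frac{a^2-1}{p} \right)$ vanishes at $a=\pm1$, so the sum over $R_p$ equals the sum over all $a \bmod p$. Split off $a=0$, which contributes $\left( \frac{-1}{p} \right)$. The remaining terms $a=1,\dots,p-1$ are exactly $S$; the extra vanishing terms $a=1,p-1$ are harmless.

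There is no real obstacle here; everything reduces to bookkeeping. The only points needing care are these:
\begin{itemize}
\item checking which of the two boundary terms $a=\pm1$ is killed by the character in each case;
\item recording the correct character value at the surviving boundary term ($\left( \frac{-2}{p} \right)$ versus $\left( \frac{2}{p} \right)$);
\item fixing the sign convention for $\epsilon_p$ so that it matches (2.14).
\end{itemize}
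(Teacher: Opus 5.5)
Your proof is correct and follows the argument the paper intends. The paper writes no proof beyond noting that $R_p=\{0,2,3,\dots,p-2\}$; the implied argument, which is yours, completes each sum to all of $\mathbb{Z}/p$, removes the $a=\pm1$ terms, and applies Gauss's evaluation $g_R-g_N=\epsilon_p\sqrt{p}$ after the shift $b=a\mp1$. Your boundary values $\left(\frac{-2}{p}\right)\zeta^{-1}$, $\left(\frac{2}{p}\right)\zeta$ and $\left(\frac{-1}{p}\right)$ are all right; the only slip is that the Gauss sum normalization you cite is the paper's (3.4), not (2.14).
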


This allows us to evaluate each $g_{\epsilon \delta}$ explicitly except for a term given by $S$, which  gives a square-root cancelation using Weil's bound for $S$. The "trick" is since $\epsilon(a), \delta(a) \in \{ \pm 1\}$
\begin{align}
g_{\epsilon\delta}=\sum_{ a \in R_p}\frac{1+\epsilon\epsilon(a)}{2} \frac{1+\delta\delta(a)}{2}  \zeta^a
\end{align}
$$=\frac{1}{4}\sum_a \zeta^a+\frac{\epsilon}{4}\sum_a \epsilon(a)\zeta^a+\frac{\delta}{4}\sum_a \delta(a)\zeta^a +\frac{\epsilon \delta}{4}\sum_a\epsilon(a) \delta(a) \zeta^a$$
$$=\frac{1}{4} \sum_a \zeta^a+\frac{\epsilon}{4}\sum_a\left( \frac{a^2-1}{p} \right) \zeta^a+\frac{\delta}{4}\left( \frac{2}{p} \right)\sum_a \left( \frac{a+1}{p} \right)\zeta^a+\frac{\epsilon \delta}{4}\left( \frac{2}{p} \right)\sum_a\left( \frac{a-1}{p} \right)\zeta^a$$

$$=-\frac{\cos 2 \pi/p}{2}+\frac{\epsilon}{4}\left ( \left( \frac{-1}{p} \right)+S \right)+\frac{\delta}{4} \left(\epsilon_p\sqrt{p}\left( \frac{2}{p} \right)\zeta^{-1}-\zeta \right)
+\frac{\epsilon \delta}{4}\left(\epsilon_p\sqrt{p}\left( \frac{2}{p} \right)\zeta  -\left(\frac{-1}{p} \right)\zeta^{-1}\right),$$

So each of the $g_{\epsilon \delta}$ can be evaluated explicitly except for the known sum $S$.

Using the well known Weil's bound $|S| \le 2 \sqrt{p}$, we have

\begin{theorem}  For all odd prime $p$ and $\epsilon, \delta \in \{ \pm 1 \}$,
$$|g_{\epsilon \delta} | \le \sqrt{p}+5/4$$

\end{theorem}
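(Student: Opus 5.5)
The plan is to start from the explicit expression for $g_{\epsilon\delta}$ derived just above from the indicator decomposition (2.14) and the preceding lemma, and bound each of its four pieces separately with the triangle inequality. The four pieces are:
\[
\tfrac14\sum_{a\in R_p}\zeta^a,\qquad
\tfrac{\epsilon}{4}\sum_a \epsilon(a)\zeta^a,\qquad
\tfrac{\delta}{4}\sum_a\delta(a)\zeta^a,\qquad
\tfrac{\epsilon\delta}{4}\sum_a\epsilon(a)\delta(a)\zeta^a .
\]
Since $|\zeta|=1$, $|\epsilon_p|=1$ and all Legendre symbols have modulus at most $1$, each piece reduces to bounding the modulus of the corresponding sum.

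The bounds come in order as follows.

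\textbf{First piece.} By the lemma, $\sum_{a\in R_p}\zeta^a=-2\cos(2\pi/p)$, so this piece has modulus at most $\tfrac12$.

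\textbf{Second piece.} Here I use $\sum_a\left(\frac{a^2-1}{p}\right)\zeta^a=\left(\frac{-1}{p}\right)+S$. I first check that $S$ as defined is a sum of the kind covered by Weil's theorem: it is an additive character times the Legendre symbol of a polynomial $a^2-1$ that is not a constant times a square. The terms with $a=\pm1$ vanish anyway, and the term $a=0$ accounts for the separate $\left(\frac{-1}{p}\right)$. Weil's bound then gives $|S|\le 2\sqrt p$, so this piece is at most $\tfrac14(1+2\sqrt p)$.

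\textbf{Third and fourth pieces.} Using $\left(\frac{2(a+1)}{p}\right)=\left(\frac{2}{p}\right)\left(\frac{a+1}{p}\right)$ and the shifted Gauss sum formulas of the lemma, each of these sums has modulus at most $\sqrt p+1$. So each of these pieces is at most $\tfrac14(\sqrt p+1)$.

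Adding the four bounds gives
\[
|g_{\epsilon\delta}|\le \tfrac12+\tfrac14(1+2\sqrt p)+\tfrac14(\sqrt p+1)+\tfrac14(\sqrt p+1)=\sqrt p+\tfrac54,
\]
which is exactly the stated bound. The constant $5/4$ arises precisely from the $\tfrac12$ together with the three $\tfrac14$ contributions, so the crude triangle inequality suffices and no cancellation between the $\zeta$ and $\zeta^{-1}$ terms is needed.

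The main point requiring care is the second piece, namely the justification of $|S|\le 2\sqrt p$. Weil's bound for mixed sums $\sum_x\chi(f(x))\psi(x)$ gives $(\deg f)\sqrt p$ in the non-degenerate case, and here $\deg f=2$; I would cite it in that form. I would also double-check the bookkeeping of the excluded residues $\pm1$ and of the $a=0$ term in the lemma, so that no extra constant sneaks in and spoils the $5/4$.
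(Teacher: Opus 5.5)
Your proposal follows the paper's own argument: expand $g_{\epsilon\delta}$ with the indicator product $\frac{1+\epsilon\epsilon(a)}{2}\cdot\frac{1+\delta\delta(a)}{2}$, evaluate the four pieces with the preceding lemma (for the last one, use $\epsilon(a)\delta(a)=\left(\frac{2}{p}\right)\left(\frac{a-1}{p}\right)$ on $R_p$), and add the triangle-inequality bounds $\tfrac12+\tfrac14(1+2\sqrt p)+2\cdot\tfrac14(\sqrt p+1)=\sqrt p+\tfrac54$.

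On the bookkeeping point you flagged, Weil's bound controls the complete sum over $\mathbb{F}_p$, namely $\left(\frac{-1}{p}\right)+S=\sum_{a\in\mathbb{F}_p}\left(\frac{a^2-1}{p}\right)\zeta^a$ (this is the Kloosterman sum $K(1,4^{-1})$). So what it literally gives is $\bigl|\left(\frac{-1}{p}\right)+S\bigr|\le 2\sqrt p$, not $|S|\le 2\sqrt p$. Used in that form, the second piece is at most $\tfrac12\sqrt p$ and you get the slightly stronger bound $\sqrt p+1$. By contrast, passing through $|S|\le 2\sqrt p+1$ and adding back $\left(\frac{-1}{p}\right)$ separately would only give $\sqrt p+\tfrac32$.
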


Note also $g_{\epsilon\delta}$ is real if $\left( \frac{-1}{p} \right) \epsilon>0$ and non-real but $g_{\epsilon+}=\overline{g_{\epsilon-}}$ otherwise.


It seems the "trick" (3.4) can be applied in general to other real characters eg.  $\alpha(a), \beta(a), \gamma(a) \in \{  \pm 1 \}$ where we defined $A_{\alpha \beta \gamma}$ analogously and evaluate

$$g_{\alpha \beta \gamma}=\sum_{a \in R_p} \left(  \frac{1+\alpha  \alpha(a)}{2}  \frac{1+\beta  \beta(a)}{2}
\frac{1+\gamma  \gamma(a)}{2}    \right) \zeta^a$$
to get cancelations on a smaller $1/8$ set for suitable characters.

\subsubsection{Square root cancellation in half subset of the quadratic residue or non residue}
We can also transfer the cancellation to subsets of the residues/non-residues which may be easier to use. Each of $R \pm 1$ and $N \pm 1$ is essentially a union of two of the $A_{\epsilon \delta}$ and can be read off as just the symmetric ($a \in X \implies p-a \in X$) and its complement in the translated set. If we now translate the partitioned set back by shifting $\mp 1$, we get subsets of $R,N$ which must have the same exponential sum since they differ from the $g_{\epsilon \delta}$ by a $\zeta^{\pm1}$ multiplicative factor. For example if $p=23$ the four sets which can computed using either (2.1) or (2.2) are
$$A_{--}=[4,9,10,13,14,19],A_{-+}=[0,8,11,12,15],$$
$$A_{+-}=[6,16,18,20,21],A_{++}=[2,3,5,7,17]$$
The quadratic residue set is
$$R=[1,4,9,16,2,13,3,18,12,8,6]$$
$$R+1=[2,5,10,17,3,14,4,19,13,9,7]$$
The symmetric elements of $R+1$ is thus
$$[10,13,14,9,4,19]=A_{--}$$ while the non-symmetric elements form
$$[2,5,17,3,7]=A_{++}$$
Transfering the last two sets back by substracting 1 gives two subsets of the quadratic residues with exponential sum bounded by $5/4+\sqrt{p}$
$$SR^+=[9,12,13,8,3,18],CR^+=[1,4,16,2,6],$$
where here $SR^+$ means the symmetric subsets of $R$ partition via a $+1$ shift, and clearly reading off directly from $R$,
$$SR^+=\{ a \in R:p-2-a \in R\}, CR^+=R \setminus SR^+$$
and similarly using a negative shift,
$$SR^-=\{ a \in R:p+2-a \in R\}, CR^-=R \setminus SR^-,$$
and similarly a partition of $N$, $SN^{\pm}, CN^{\pm}$ using a $\pm 1$ shift of $N$.
We also have
$$R-1=[0,3,8,15,1,12,2,17,11,7,5]$$
whose symmetric and non-symmetric subsets are given by
$$A_{-+}=[0,8,15,12,11],A_{++}=[3,1,2,17,7,5]$$
whose translate back by adding 1 to subsets of $Q$
$$SR^-=[1,9,16,12,13],CR^-=[4,2,3,18,8,6].$$
Note that we always have $1 \in SR^-$ since $1 \in R$ and $p+2-1=1 \mod p$, and $p-1$ is always in $SR^+$ or $SN^{+}$ since $p-2-(p-1)=p-1 \mod p$.
Since the non-residues
$$N=\{ 4,6,9,10,13,14,16,18,19,20,21\},$$
we can just read off the symmetric partition
$$SN^+=\{7,14,10,11,22\}, SN^-=\{5,20,10,15,11,14\}.$$

Note $A_{++}$ appears in both the $R \pm 1$ shift and $A_{+-}$ does not appear above. It will appear twice when we shift  $N \pm 1$.
It is always the case that $R\pm1, N \pm 1$ is essentially a union of two of the $A_{\epsilon \delta}$ (drop the $1,p-1$). Since $0$ is symmetric and $\epsilon(0)=\left( \frac{-1}{p} \right)$, the symmetric sets are $A_{+1},A_{++}$ with $\epsilon=+$ when $p=1 \mod 4$ and $A_{--},A_{-+}$ when $p=3 \mod 4$.

So indeed we have a refinement of the residue/nonresidue which preserve provable square-root cancelations.

 We may also verify (2.8) for $p=23$ above to define the $A_{\epsilon \delta}$ via the multiplicative order, we must have
 $$A_{++}=I_{11},\;\;A_{-+}=I_3 \cup I_4 \cup I_6 \cup I_{12},\;\;A_{+-}=I_{22},\;\;A_{--}=I_8 \cup I_{24},$$
 and the $I_d$ are computed as the roots of splitting $\Phi_d^+(2x) \mod p$ which gives
 $$I_3=[11],I_4=[0],I_6=[12],I_8=[14,9],I_{11}=[2,3,5,7,17],I_{12}=[8,15],$$
 $$I_{22}=[6,16,18,20,21],I_{24}=[19,13,4,10]$$
 which indeed agree with (2.8). These are also the only $d>2$ (divisors of $23 \pm 1$) where $\Phi_d^+(x)$ splits completely mod $p$

 We can also verify that $ord_p(\omega_a)=24$ for the unreal primitive root $a=19$ . This order is the least $n$ such that $\omega_a^n=1 \mod p$  or the least $n$ such that $([T_n(a),U_{n-1}(a)]=[1,0] \mod p$. It also equals the period of the second order linear recurrence $T_0(a)=1,T_1(a)=a,T_{n+1}(a)=2aT_n(a)-T_{n-1}(a) \mod p$ which is the least $n$ such that $T_n(a)=1, T_{n-1}(a)=T_{n+1}(a)=a \mod p$ but it is given just by the least $n$ such that $T_n(a)=1 \mod p$ as it is automatic that $T_n(a)=1 \mod p$ implies $T_{n+1}(a)=a, U_{n-1}(a)=0 \mod p$, so we need only compute the first coordinate below. It seems interesting that the multiplicatively defined order can be computed additively as a period of a linear recurrence.

Indeed we have the iterates for $n=1,2,3,...24$ is

 $$[19,1],[8,15],[9,17],[12,10],[10,18],[0,7],[13,18],[11,10],[14,17],[15,15],[4,1],[22,0],$$
 $$[4,22],[15,8],[14,6],[11,13],[13,5],[0,16],[10,5],[12,13],[9,6],[8,8],[19,22],[1,0]$$

 As noted before even though $19$ is a primitive root only half the subset of $R_p$ appears in the first coordinate, only those with $\epsilon(a)=-1,0$ and note the first coordinate is palidormic,  as this is a negative cycle as $I_{24} \subset A_{--}$. Actually this is some kind of second order Lucas sequence for fixed $a=19$ which is well known. Our main observation is that there is a structure  to view all the $a$ together, $R_p$ is like a moduli space for all the order $(\mathbb{Z}/p\mathbb{Z})[\sqrt{a^2-1}]$. It seems useful to state our observation on the partition of the residue/non-residue.

 \begin{lemma} Let $X$ be either $R$ or $N$. Then can can read off a partition of $X$ into two disjoint sets in two distinct ways which are translates of $A_{\epsilon \delta}$ shifted by $\pm 1$. WE define

 $$SX^+=\{ a \in X : p-2-a \in X\}, CX^+=X \setminus SX^+,$$
 and
 $$SX^-=\{ a \in X : p+2-a \in X \}, CX^-=X \setminus SX^-$$
 then $$|SX^{\pm}|,|CX^{\pm}| \le 9/4 + \sqrt{p}$$

 \end{lemma}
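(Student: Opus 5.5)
The statement as written bounds $|SX^{\pm}|$ and $|CX^{\pm}|$. Since these sets have size about $p/4$, I read it as a bound on the exponential sums $g_{SX^{\pm}}, g_{CX^{\pm}}$ in the notation $(3.2)$. The plan is to identify each of the four sets, after a shift by $\mp1$, with one of the $A_{\epsilon\delta}$ up to at most one element. Then I apply Theorem 3.5, $|g_{\epsilon\delta}|\le\sqrt p+5/4$, and absorb the possible extra element at a cost of $1$. That gives $9/4+\sqrt p$.

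\textbf{Step 1: translate membership into characters.} Take $SX^+$ with $X$ of Legendre sign $\chi=\pm1$, and put $b=a+1$. Then $a\in X$ says $\left(\frac{b-1}{p}\right)=\chi$. The condition $p-2-a\in X$ says $\left(\frac{-(b+1)}{p}\right)=\chi$, i.e. $\left(\frac{b+1}{p}\right)=\left(\frac{-1}{p}\right)\chi$.

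Multiplying the two conditions gives $\epsilon(b)=\left(\frac{b^2-1}{p}\right)=\left(\frac{-1}{p}\right)$. The second condition also gives $\delta(b)=\left(\frac{2}{p}\right)\left(\frac{b+1}{p}\right)=\left(\frac{-2}{p}\right)\chi$.

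Conversely, any $b\in R_p$ with these values of $\epsilon,\delta$ satisfies $\left(\frac{b-1}{p}\right)=\chi$, by the relation $\left(\frac{b-1}{p}\right)=\left(\frac{2}{p}\right)\epsilon\delta$ noted after $(2.6)$. Hence $b-1\in X$, and we get $SX^+ + 1 = A_{\epsilon_0\delta_0}$ with $\epsilon_0=\left(\frac{-1}{p}\right)$ and $\delta_0=\left(\frac{-2}{p}\right)\chi$. This holds up to the points where $b\in\{\pm1\}$, which are excluded from $R_p$.

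For the complement, take $a\in X$ with $p-2-a\notin X$, excluding $a=p-2$ where $p-2-a=0$. Then $\epsilon(b)=-\left(\frac{-1}{p}\right)$, and $\delta(b)=\left(\frac{2}{p}\right)\left(\frac{b+1}{p}\right)$ is again fixed. So $CX^+ +1$ is $A_{-\epsilon_0,\delta_1}$ for the appropriate $\delta_1$, together with possibly the single point $b=-1$ (i.e. $a=p-2$, when $p-2\in X$). The point $b=1$ would need $a=0$, which is not in $X$.

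The case $SX^-, CX^-$ is identical with $b=a-1$ and $p+2-a=-(b-1)$. Now the exceptional point is $a=2$, i.e. $b=1$.

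\textbf{Step 2: compare the sums.} Write $A$ for the matching set $A_{\epsilon\delta}$ and $E$ for the at most one exceptional point (here $E=\{p-2\}$ or $E=\{2\}$, when that point lies in $X$). We have $g_{SX^{\pm}}=\sum_{a} \zeta^{a}=\zeta^{\mp1}\,g_A$, plus at most the single term $\zeta^{a}$ for $a\in E$; the same holds for $g_{CX^{\pm}}$. Since $|\zeta^{\mp1}|=1$, Theorem 3.5 gives
$$|g_{SX^{\pm}}|,\ |g_{CX^{\pm}}|\le \sqrt p+\tfrac54+1=\sqrt p+\tfrac94 .$$

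\textbf{Main obstacle.} The delicate part is the bookkeeping in Step 1: checking exactly which boundary points ($a=0,\pm2$, and $b=0,\pm1$) fall into or out of each set, and making sure at most one stray term appears rather than two. In particular, $b=0$ corresponds to $a=\mp1$, which does lie in $R_p$, so it is harmless. This must be checked case by case in the sign of $\left(\frac{-1}{p}\right)$ and $\chi$, and I would verify it against the $p=23$ example above. The analytic input, Weil's bound, is entirely contained in Theorem 3.5.
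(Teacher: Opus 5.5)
Your proposal is correct and follows essentially the paper's route. You read the bound as one on the exponential sums $g_{SX^{\pm}},g_{CX^{\pm}}$, identify $SX^{\pm}\pm1$ exactly, and $CX^{\pm}\pm1$ up to the single point $\mp1$ (i.e.\ $a=\mp2$), with the sets $A_{\epsilon\delta}$ — matching the paper's ``drop the $1,p-1$'' remark and its $p=23$ example — and then apply $g_{SX^{\pm}}=\zeta^{\mp1}g_{\epsilon\delta}$ with the bound $|g_{\epsilon\delta}|\le\sqrt p+5/4$, paying $1$ for the stray term; the only step left implicit, that in the converse $\delta_0$ also forces $-(b+1)\in X$ via $\left(\frac{b+1}{p}\right)=\left(\frac{2}{p}\right)\delta_0$, is immediate.
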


 \subsubsection{Evaluating $h_{\epsilon \delta}:=\sum_{a \in A_{\epsilon \delta}}a$} Clearly
 $$h_{\epsilon \delta}=\sum_{a \in R_p}
 \frac{1+\epsilon \epsilon(a)}{2} \frac{1+\delta \delta(a)}{2} $$

 If we let $A_{\epsilon \delta}^{\pm}$ be the set $A_{\epsilon \delta} \pm 1$ and for any set $A$, the sum $ \sum A:=\sum_{a \in A} a$. We then have for $p=3 \mod 4$,

 \begin{align} \sum N -\sum R=p \cdot h(-p) = \left( \frac{2}{p} \right)   \left(\sum A_{--}^+ +\sum A_{+-}^+-\sum A_{-+}^+ -\sum A_{++}^+ -2 \right) \end{align}
Also,
 \begin{align} p \cdot h(-p) = \left( \frac{-2}{p} \right)   \left(\sum A_{--}^- +\sum A_{++}^--\sum A_{-+}^- -\sum A_{+-}^- -(p-2) \right)+p \end{align}

Can we prove $RHS>0$ ?
Numerical observation :  For (3.6), the term $\sum A_{+-}^+$, even $\lfloor (\sum A_{+-}^+)/p \rfloor$ dominate the other it is a unique max/min when $\left( \frac{2}{p} \right)=1/-1$.
For (3.7) , it is dominate by $\sum A_{++}^-$

\subsection{Chebyshev-AKS}

We know that $T_n$ and $t_n$ agrees locally $T_p(x)=t_p(x) \mod p$ but in fact there is a converse
\begin{lemma} For integer $n >2$,
\begin{align} T_n(x)=t_n(x)=x^n \mod n \end{align} if and only if $n$ is prime.
\end{lemma}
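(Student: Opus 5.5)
We must show that for $n>2$, $T_n(x)\equiv x^n$ coefficientwise in $\mathbb{Z}[x]$ modulo $n$ if and only if $n$ is prime. The plan is to use the explicit expansion
$$T_n(x)=\frac{(x+\sqrt{x^2-1})^n+(x-\sqrt{x^2-1})^n}{2}=\sum_{k\ge 0}\binom{n}{2k}x^{n-2k}(x^2-1)^k,$$
which follows from (1.1) by taking the "real" part. Equivalently, we can use the classical closed form
$$T_n(x)=\frac{n}{2}\sum_{k=0}^{\lfloor n/2\rfloor}\frac{(-1)^k}{n-k}\binom{n-k}{k}(2x)^{n-2k}.$$

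\emph{Prime case.} For $p$ an odd prime, compute in $\mathbb{F}_p[x][\sqrt{x^2-1}]$. The Frobenius gives
$$(x\pm\sqrt{x^2-1})^p=x^p\pm (x^2-1)^{(p-1)/2}\sqrt{x^2-1}.$$
Adding the two signs and dividing by $2$, which is invertible since $p$ is odd, yields $T_p(x)\equiv x^p \pmod p$. This is (1.8). Equivalently, in the binomial form every $\binom{p}{2k}$ with $0<2k<p$ vanishes mod $p$, leaving only the $k=0$ term $x^p$.

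\emph{Composite case.} Take $n>2$ composite. I will exhibit a coefficient of $T_n(x)$ that is not $\equiv 0 \pmod n$ in a degree below $n$. The cleanest choice is the coefficient of $x^{n-2}$. From the closed form, with $k=1$, it equals
$$-\frac{n}{2}\cdot\frac{1}{n-1}\binom{n-1}{1}2^{n-2}=-n2^{n-3}.$$
This is divisible by $n$, so it is useless. I therefore move to general $k$. The coefficient of $x^{n-2k}$ is
$$c_k=(-1)^k\,\frac{n}{n-k}\binom{n-k}{k}2^{n-2k-1}.$$

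\emph{Case $n=2m$ even, $n\ge 4$.} The constant term is $c_m=T_n(0)=(-1)^m=\pm 1$, which is not divisible by $n$. So the identity fails.

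\emph{Case $n$ odd composite.} Let $q$ be a prime factor of $n$, write $q^a\,\|\,n$, and take $k=q$. Note $n-2q\ge 0$ since $n\ge q^2\ge 3q$, so this coefficient exists. The goal is to show $q^a\nmid c_q$. Since $2$ is prime to $q$, it suffices to show
$$v_q\!\left(\frac{n}{n-q}\binom{n-q}{q}\right)<a.$$
Using $\frac{n}{n-k}\binom{n-k}{k}=\frac{n}{k}\binom{n-k-1}{k-1}$, with $k=q$ the expression becomes
$$\frac{n}{q}\binom{n-q-1}{q-1}.$$
This has $q$-adic valuation $a-1+v_q\binom{n-q-1}{q-1}$. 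By Kummer's theorem (or directly),
$$\binom{n-q-1}{q-1}=\frac{(n-q-1)(n-q-2)\cdots(n-2q+1)}{(q-1)!}.$$
The numerator runs over $q-1$ consecutive integers congruent to $-1,-2,\dots,-(q-1)$ mod $q$, since $q\mid n$. None is divisible by $q$, so the valuation is $0$. Hence $v_q(c_q)=a-1<a$, so $n\nmid c_q$ and the identity fails.

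\emph{Main obstacle.} The step needing the most care is the composite odd case: choosing the right coefficient and verifying the valuation computation, including the fact that the rational expression $\frac{n}{n-k}\binom{n-k}{k}$ is an integer and the rewriting identity. I would double-check the closed form against the recurrence $T_{n+1}=2xT_n-T_{n-1}$ before relying on it. Alternatively, the binomial form $\sum_k\binom{n}{2k}x^{n-2k}(x^2-1)^k$ gives the coefficient of $x^{n-2}$ as a combination of the $\binom{n}{2k}$, but that is messier. This mirrors the classical proof that $(x+a)^n\equiv x^n+a$ characterizes primes, on which AKS rests.
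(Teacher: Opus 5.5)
Your proof is correct and follows the paper's route: the prime case via the vanishing of $\binom{p}{2k}$ for $0<2k<p$, the even case via the constant term $(-1)^{n/2}$, and the odd composite case via the coefficient $-\frac{n}{q}\binom{n-q-1}{q-1}2^{n-2q-1}$ of $x^{n-2q}$. The paper shows $q\nmid\binom{n-q-1}{q-1}$ by Lucas's theorem rather than your direct residue count, and one small slip needs fixing: $n\ge q^2$ fails for an arbitrary prime factor $q$ (e.g.\ $n=15$, $q=5$), but $n\ge 3q$ still holds because $n/q$ is an odd integer greater than $1$.
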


But we also know that for $n \ge 2, gcd(a,n)=1$,
\begin{align} t_n(x+a)=t_n(x)+a \mod n, \end{align}
if and only if $n$ is prime.
This is the basis of the AKS deterministic polynomial time prime testing algorithm \cite{AKS}. (3.8) and (3.9) implies
the Chebyshev analogue of Lemma 3.9
\begin{lemma} Given $n \ge 2 , gcd(a,n)=1,$ then $n$ is prime if and only if
\begin{align} T_n(x+a)=T_n(x)+a \mod n.\end{align}
\end{lemma}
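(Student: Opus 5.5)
The plan is to reduce the statement to a comparison of a single coefficient in the polynomial congruence $T_n(x+a)\equiv T_n(x)+a \pmod n$. For the forward direction I will use Lemma 3.8 and (3.9). For the converse I will use the explicit coefficients of $T_n$ together with the smallest prime factor of $n$.

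\emph{Forward direction.} If $n=p$ is an odd prime, then by (3.8) $T_p(x)\equiv x^p$, so $T_p(x+a)\equiv (x+a)^p\equiv x^p+a^p\equiv T_p(x)+a \pmod p$ by Fermat. This is exactly (3.9) transported through (3.8). Before going further I will check the case $n=2$, because there the statement needs care. We have $T_2(x)=2x^2-1\equiv 1 \pmod 2$, so $T_2(x+a)\equiv 1$, while $T_2(x)+a\equiv 0$ for odd $a$. The identity therefore fails at $n=2$, and I will state the lemma for odd $n$ (or $n>2$, as in Lemma 3.8).

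\emph{Converse, even composite $n$.} For even $n$, write $T_n=T_2\circ T_{n/2}=2T_{n/2}^2-1\equiv 1 \pmod 2$. The same parity computation as for $n=2$ then contradicts the identity, since $a$ is odd.

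\emph{Converse, odd composite $n$.} Let $q$ be the smallest prime factor of $n$, with $q^k\,\|\,n$ and $q<n$. I will use the standard expansion
$$T_n(x)=\sum_{0\le i\le n/2} t_{n-2i}x^{n-2i},\qquad t_{n-2i}=(-1)^i2^{n-2i-1}\frac{n}{n-i}\binom{n-i}{i}=(-1)^i2^{n-2i-1}\frac{n}{i}\binom{n-i-1}{i-1}\ (i\ge1).$$
I will compare the coefficients of $x^{n-q}$ on both sides. On the right-hand side this coefficient is $t_{n-q}$ (it is $0$ when $q$ is odd, since $n-q$ is even). On the left it equals $\sum_{0\le 2i\le q} t_{n-2i}\binom{n-2i}{q-2i}a^{q-2i}$. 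The term $i=q/2$ does not occur because $q$ is odd. So the congruence forces
$$2^{n-1}\binom{n}{q}a^q+\sum_{1\le i<q/2} t_{n-2i}\binom{n-2i}{q-2i}a^{q-2i}\equiv 0 \pmod n .$$
For $1\le i<q/2$ we have $\gcd(i,n)=1$, because every prime factor of $n$ is at least $q$. Since $i\,t_{n-2i}$ is an integer multiple of $n$, it follows that $n\mid t_{n-2i}$, and all these terms vanish mod $n$. The remaining term $2^{n-1}a^q\binom{n}{q}$ has $2^{n-1}a^q$ prime to $n$, because $n$ is odd and $\gcd(a,n)=1$. Moreover $v_q\binom{n}{q}=k-1<k$, since $\binom{n}{q}=\frac{n}{q}\binom{n-1}{q-1}$ and $\binom{n-1}{q-1}\equiv 1 \pmod q$ by Lucas' theorem, using $q\mid n$. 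Hence $q^k\nmid 2^{n-1}a^q\binom nq$, so $n\nmid 2^{n-1}a^q\binom nq$, which is a contradiction.

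The main obstacle is choosing the right coefficient and controlling the lower-order terms $t_{n-2i}$ mod $n$. The trick is that the factor $\frac{n}{i}$ in the closed form kills them exactly when $i$ is smaller than every prime factor of $n$. This is why I pick the coefficient of $x^{n-q}$ with $q$ the least prime factor. The Lucas-theorem step $v_q\binom nq=v_q(n)-1$ is routine. As a fallback I would first reduce to the case $a=1$: iterating the identity gives $T_n(x+c)\equiv T_n(x)+c$ for all multiples $c$ of $a$, hence for all residues $c$ mod $n$. This lets me drop the $a^{q-2i}$ factors if they cause any trouble.
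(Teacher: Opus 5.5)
Your proof is correct, and it does more than the paper does. The paper's entire proof is the remark that (3.8) and (3.9) imply the lemma.

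\textbf{Forward direction.} This goes through exactly as you wrote it: $T_p(x)\equiv x^p$, and then $(x+a)^p\equiv x^p+a \pmod p$.

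\textbf{Converse.} This is not a formal consequence of (3.8) and (3.9). For composite $n$ those two facts only say that $g=T_n-t_n$ and $(x+a)^n-x^n-a$ are nonzero mod $n$. Nothing in them rules out cancellation in $T_n(x+a)-T_n(x)-a=\bigl((x+a)^n-x^n-a\bigr)+\bigl(g(x+a)-g(x)\bigr)$.

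Your comparison of the $x^{n-q}$ coefficients, with $q$ the least prime factor of $n$, supplies exactly this missing step:
\begin{itemize}
\item The Chebyshev correction terms $t_{n-2i}$ with $1\le i<q/2$ vanish mod $n$, because of the factor $n/i$ and $\gcd(i,n)=1$.
\item What survives is $2^{n-1}$ times the AKS witness $\binom{n}{q}a^q$.
\item Lucas' theorem gives $v_q\binom{n}{q}=v_q(n)-1$, so this term is nonzero mod $n$.
\end{itemize}
This uses the same ingredients as the paper's proof of (3.8), namely the closed form (3.11) and a Lucas congruence, but applied to $T_n(x+a)$ instead of $T_n(x)$. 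It needs only the trivial half of (3.9). It also produces an explicit witness coefficient, in the spirit of the paper's ``Chebyshev polynomial knows the factors'' remark. Your parity argument, which disposes of all even $n$ at once, is also fine.

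\textbf{The case $n=2$.} You are right that the statement as written is false at $n=2$. There $T_2(x+a)\equiv 1$, while $T_2(x)+a\equiv 0 \pmod 2$ for odd $a$. The lemma should therefore assume $n>2$, as the lemma giving (3.8) already does.
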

This implies there should also be a Chebyshev version of AKS.  To prove (3.8) we need a lemma first

\begin{lemma}
(a) For $ 1 \le k \le \lfloor n/2 \rfloor$, $$(-1)^ka_k:=\sum_{t=k} ^{\lfloor n/2 \rfloor} {n \choose 2t} { t \choose k} = {n-k-1 \choose k-1} \frac{n}{k} 2^{n-(2k+1)}$$

(b) If $n$ is odd composite and $p|n$ then ${n-p-1 \choose p-1}=1 \mod p$
 \end{lemma}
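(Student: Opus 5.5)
The plan is to treat (a) and (b) separately: (a) by comparing two expansions of $T_n(x)$, and (b) by Lucas' theorem.

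\textbf{Part (a).} I will compute the coefficient of $x^{n-2k}$ in $T_n(x)$ in two ways. First, from $T_n(x)=\frac{\omega_x^n+\overline{\omega_x}^n}{2}$ and the binomial theorem, only even powers of $\sqrt{x^2-1}$ survive, so
$$T_n(x)=\sum_{t=0}^{\lfloor n/2\rfloor}{n\choose 2t}x^{n-2t}(x^2-1)^t .$$
Expanding $(x^2-1)^t=\sum_{k}{t\choose k}(-1)^k x^{2t-2k}$ and collecting terms, the coefficient of $x^{n-2k}$ is $(-1)^k\sum_{t\ge k}{n\choose 2t}{t\choose k}$. This is exactly the left-hand side $(-1)^k a_k$ of (a).

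Second, I will use the classical explicit formula
$$T_n(x)=\frac n2\sum_{k=0}^{\lfloor n/2\rfloor}(-1)^k\frac{(n-k-1)!}{k!\,(n-2k)!}(2x)^{n-2k}.$$
To keep the argument self-contained I will check this formula by induction from the recurrence $T_{n+1}=2xT_n-T_{n-1}$ with $T_0=1$ and $T_1=x$. Comparing coefficients of $x^{n+1-2k}$ reduces the induction step to a Pascal-type identity among the numbers $\frac{n}{n-k}{n-k\choose k}$. An alternative check is to read off coefficients of $\sum_n T_n(x)z^n=\frac{1-xz}{1-2xz+z^2}$, expanding in powers of $(2x-z)z$.

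For $k\ge1$ the two coefficients are then matched by the rewriting
$$\frac{n}{2}\cdot\frac{(n-k-1)!}{k!\,(n-2k)!}\,2^{n-2k}={n-k-1\choose k-1}\frac nk\,2^{n-2k-1},$$
which holds because $\frac{(n-k-1)!}{k!(n-2k)!}=\frac1k{n-k-1\choose k-1}$. I expect the only real work to be verifying the explicit formula for $T_n$, namely the inductive step; everything else is bookkeeping.

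\textbf{Part (b).} Write $n=pm$. Since $n$ is odd, $p$ is odd. Since $n$ is composite, $m\ge 3$ is odd. Then
$$n-p-1=p(m-2)+(p-1),$$
with $0\le p-1<p$ and $m-2\ge1$. So in base $p$ the lowest digit of $n-p-1$ is $p-1$, and its higher digits are the digits of $m-2$. Meanwhile $p-1$ has lowest digit $p-1$ and all higher digits $0$. Lucas' theorem then gives
$${n-p-1\choose p-1}\equiv{p-1\choose p-1}{m-2\choose 0}\equiv 1 \pmod p,$$
because every higher digit of the bottom entry is zero, so those factors equal $1$.
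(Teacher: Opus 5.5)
Your proof is correct. Part (b) is the paper's argument: write $n=mp$ with $m\ge 3$ odd, note that $n-p-1=(m-2)p+(p-1)$ has lowest base-$p$ digit $p-1$, and apply Lucas.

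\textbf{Part (a) takes a different route.}

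\emph{The paper's route.} It sets $F(y)=\sum_t\binom{n}{2t}y^t=\frac{(1+\sqrt y)^n+(1-\sqrt y)^n}{2}$ and identifies the sum as the Taylor coefficient $F^{(k)}(1)/k!$. It then states the closed form of that derivative without showing the computation. Afterwards it \emph{uses} (a) to obtain the explicit expansion of $T_n$ in the proof of (3.8).

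\emph{Your route.} You run the logic the other way. You prove the classical closed form for the coefficients of $T_n$ from the recurrence (or from the generating function $\frac{1-xz}{1-2xz+z^2}$). You then get (a) by matching it against the binomial expansion.

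\emph{Comparison.} Since $T_n(x)=x^nF(1-x^{-2})$, the Taylor coefficients of $F$ at $y=1$ are exactly the coefficients of $T_n$, up to the sign $(-1)^k$. So both proofs rest on the same fact.
\begin{itemize}
\item The paper's formulation needs no prior knowledge of the answer. The derivative can be evaluated by Lagrange inversion, once one notes that $(1-\sqrt y)^n$ vanishes to order $n$ at $y=1$.
\item Yours needs the closed form in advance, but checks it by a routine induction. It therefore supplies the computation the paper leaves implicit.
\item The price of your route is that (a) becomes a corollary of the explicit formula for $T_n$ rather than its source.
\end{itemize}

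Two small bookkeeping points, neither of which affects validity:
\begin{itemize}
\item The closed form with prefactor $n/2$ does not hold at $n=0$. Your induction should therefore take $n=1,2$ as base cases. The Pascal-type identity $c(n+1,k)=c(n,k)+c(n-1,k-1)$, with $c(n,k)=\frac{n}{n-k}\binom{n-k}{k}$, is then only needed for $n\ge 2$.
\item The coefficient of $x^{n-2k}$ is $(-1)^k\sum_{t}\binom{n}{2t}\binom{t}{k}=a_k$, not $(-1)^ka_k$. This is harmless, because the same factor $(-1)^k$ appears in both expansions and cancels.
\end{itemize}
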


\begin{proof} (a) Let
$$F(y):=\sum_{t \ge 0} {n \choose 2t} y^t =\frac{(1+\sqrt{y})^n+(1-\sqrt{y})^n}{2}$$
then
$$\sum_{t \ge k} {n \choose 2t}{ t \choose k} =\frac{1}{k!} F^{k}(1)=\frac{1}{k!}\frac{n(n-k-1)!2^{n-1}}{2^{2k}(n-2k)!}.$$
(b) Let $n=ap$, $a \ge 3 $ and odd. Let the $p$-ary expansion $m=n-p-1$ be $m=\sum_{j=0}^k m_jp^j$. Since $m=(a-2)p+(p-1)$, we must have $m_0=p-1$, by Lucas lemma.
$$ {n-p-1 \choose p-1}={p-1 \choose p-1} {m_1 \choose 0} ... {m_k \choose 0}=1 \mod p.$$
\end{proof}

\subsubsection{Proof of (3.8) }
\begin{proof} Since $T_n(x)$ is the real part of the unit $\omega_x^n$, we have
$$T_n(x)=\frac{\omega_x^n+\overline{\omega_x}^n}{2}=\frac{(x+\sqrt{x^2-1})^n+(x-\sqrt{x^2-1})^n}{2}$$
 $$= \sum_{k=0}^{\lfloor n/2 \rfloor}  {n \choose 2k}x^{n-2k}(x^2-1)^k=\sum_{k=0}^{\lfloor n/2 \rfloor}a_kx^{n-2k}$$
where $$a_k=(-1)^k\sum_{t=k}^{\lfloor n/2 \rfloor}{n \choose 2t}{t \choose k}.$$
So $a_0=  \sum_{t=0}^{\lfloor n/2 \rfloor}  {n \choose 2t}=2^{n-1}$ and for $n=p$ an odd prime then $p|{n \choose 2t}$ for $0<k \le t \le \lfloor n/2 \rfloor$ as $2 \le 2t<p$, so that $p|a_k$.  $T_p(x)=2^{p-1}x^p=t_p(x) \mod p$. 

By Lemma 3.9[1] (this is also formula (16) of \cite{Wo})

\begin{align} T_n(x)=2^{n-1}x^n +\sum_{k=1}^{\lfloor n/2 \rfloor} (-1)^k {n-k-1 \choose k-1} \frac{n}{k}2^{n-(2k+1)}x^{n-2k}.\end{align}
If $n=2t$ is an even composite, then by (3.11) $a_t=(-1)^t \not=0 \mod n=2t$.
 If $n$ is odd composite and $p$ is any prime dividing $n$, then since $p$ does not divide ${n-p-1 \choose p-1}= 1 \mod p$, $a_p \not= 0 \mod n$. Note the proof shows if $n=\prod p_j$ is odd composite squarefree, then $gcd(n,a_{p_j})=n/p_j$ so $a_{p_j} \not=0 \mod n$.
\end{proof}

\subsubsection{Chebyshev polynomial knows the factors}

By Lemma (3.8) any non-vanishing $a_k$ is a witness to the compositeness of $n$. In fact it tells us more, it actually factors $n$.
First write
$$g_n(x):=T_n(x)-t_n(x)  =(2^{n-1}-1)x^n+\sum_{k=1}^{\lfloor n/2 \rfloor} a_k x^{n-2k}$$
\begin{lemma} Assume $n$ is odd and $k>1$. Then $a_k \not=0 \mod n$ implies 
\begin{align} gcd(n,n-2k)>1, n|a_k(n-2k),gcd(a_k,n)>1.\end{align}
\end{lemma}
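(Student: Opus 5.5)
The plan is to get all three conclusions from one divisibility identity, namely $n \mid a_k(n-2k)$, which I expect to hold for every $1 \le k \le \lfloor n/2 \rfloor$. Once it is established, the other two claims follow from elementary gcd considerations.

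The starting point is the closed form of Lemma 3.9(a), in the form (3.11):
$$a_k=(-1)^k {n-k-1 \choose k-1}\frac{n}{k}2^{n-(2k+1)}.$$
Since $n$ is odd and $k\le \lfloor n/2\rfloor$, we have $n-2k\ge 1$, so the power of $2$ is an honest integer. Multiply by $(n-2k)$ and absorb the factor $(n-2k)/k$ into the binomial coefficient:
$$\frac{n-2k}{k}{n-k-1 \choose k-1}=\frac{(n-k-1)!\,(n-2k)}{k!\,(n-2k)!}=\frac{(n-k-1)!}{k!\,(n-2k-1)!}={n-k-1 \choose k}.$$
This gives the integer identity
$$a_k(n-2k)=(-1)^k\, 2^{n-2k-1}\, n\, {n-k-1 \choose k},$$
whose right-hand side is visibly a multiple of $n$. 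That is the second assertion in (3.12), and it holds with no hypothesis on $a_k$. The only point to check carefully is that the binomial manipulation is valid in the boundary case $n-2k-1=0$, where both sides reduce correctly.

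Next, assume $a_k\not\equiv 0 \pmod n$ and argue by contradiction.

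If $\gcd(n,n-2k)=1$, then $n\mid a_k(n-2k)$ forces $n\mid a_k$, which is excluded. Hence $\gcd(n,n-2k)>1$; since $n$ is odd, this is the same as $\gcd(n,k)>1$.

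If $\gcd(a_k,n)=1$, then $n\mid a_k(n-2k)$ forces $n\mid n-2k$. But $0<n-2k<n$ because $k\ge 1$ and $n$ is odd, so this is impossible. Hence $\gcd(a_k,n)>1$.

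The hypothesis $k>1$ is only needed to stay in the range where (3.11) applies with a genuine binomial coefficient $\binom{n-k-1}{k-1}$. I would remark that the argument in fact works for every $k\ge1$.

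The main (mild) obstacle is spotting the identity $(n-2k)\binom{n-k-1}{k-1}/k=\binom{n-k-1}{k}$, which is what converts the fraction $n/k$ in Lemma 3.9(a) into a clean factor of $n$. Everything after that is routine divisibility.
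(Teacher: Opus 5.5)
Your proof is correct and is essentially the paper's argument: the paper likewise rewrites the closed form (3.11) as $(-1)^k a_k(n-2k)=n\,2^{n-2k-1}\binom{n-k-1}{n-2k-1}$ (your $\binom{n-k-1}{k}$) to get $n\mid a_k(n-2k)$, and then uses $0<n-2k<n$ to force $\gcd(a_k,n)>1$. The differences are cosmetic: the paper gets $\gcd(n,n-2k)=\gcd(n,k)>1$ directly from $n\mid k\,a_k$ rather than from $n\mid a_k(n-2k)$, and your one-line binomial identity is cleaner than the paper's chain of rewritings, whose last expression contains a misprint.
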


\begin{proof} If $gcd(n-2k)=gcd(n,k)=1$, $a_k=0 \mod n$. We also have
$$a_k(-1)^k={n-k \choose k}\frac{n}{2(n-k)}2^{n-2k}={n-k \choose n-2k}\frac{n}{2(n-k)}2^{n-2k}={n-k-1 \choose n-2k-1}\frac{n(n-k)}{2(n-2k)}$$
So we have $n|a_k(n-2k)$ since $n$ is odd. But $n \not|(n-2k)$, we must have $gcd(a_k,n)>1$.
\end{proof}

(3.12) clearly implies that any non-vanishing term $a_kx^{n-2k} \mod n$ must gives a nontrivial factorisation of $n$. In particular,
\begin{cor}
If $n=pq$, every non vanishing $a_k \mod n$ factor $n$ as $n=gcd(a_k,n)gcd(k,n)$. This works even when $p=q$
\end{cor}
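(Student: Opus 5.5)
The plan is to derive the corollary directly from Lemma 3.12 (the lemma just above), using nothing beyond the divisibility $n\mid a_k(n-2k)$ and the ranges of $k$.

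First dispose of $k=1$. By the explicit formula (3.11), $a_1=-n\,2^{n-3}\equiv 0 \pmod n$, so $a_1$ is never a non-vanishing coefficient. Hence we may assume $1<k\le \lfloor n/2\rfloor$ with $a_k\not\equiv 0 \pmod n$. Then Lemma 3.12 gives three facts:
\begin{itemize}
\item $\gcd(n,n-2k)>1$; since $n$ is odd, this gcd equals $\gcd(n,2k)=\gcd(n,k)$;
\item $n\mid a_k(n-2k)$;
\item $\gcd(a_k,n)>1$.
\end{itemize}
We also have two upper bounds. Since $0<k<n$, we get $\gcd(k,n)<n$. Since $a_k\not\equiv 0 \pmod n$, we get $\gcd(a_k,n)<n$. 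So both gcd's are proper nontrivial divisors of $n=pq$.

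\emph{Case $p\neq q$.} Each of $\gcd(k,n)$ and $\gcd(a_k,n)$ lies in $\{p,q\}$; say $\gcd(k,n)=p$.
\begin{itemize}
\item First, $q\nmid k$. Otherwise $pq\mid k$, which is impossible since $0<k<n$.
\item Hence $q\nmid n-2k$, because $q\mid n$, $q$ is odd, and $q\nmid k$.
\item From $q\mid n\mid a_k(n-2k)$ we conclude $q\mid a_k$, so $\gcd(a_k,n)=q$.
\end{itemize}
Therefore $\gcd(a_k,n)\gcd(k,n)=qp=n$.

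\emph{Case $p=q$, i.e.\ $n=p^2$.} Here $\gcd(k,n)=p$, since it is a nontrivial proper divisor of $p^2$. Because $k<p^2$ we have $v_p(k)=1$. As $p$ is odd, this gives $v_p(n-2k)=v_p(2k)=1$. From $p^2\mid a_k(n-2k)$ we then get $p\mid a_k$, and $a_k\not\equiv 0 \pmod{p^2}$ forces $\gcd(a_k,n)=p$. Again the product is $p\cdot p=n$.

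The only delicate point is showing that the prime dividing $a_k$ is the complementary one, rather than the same prime that divides $k$. This is handled by the valuation count for $n-2k$ above: $n-2k$ absorbs at most the part of $n$ shared with $k$, and the divisibility $n\mid a_k(n-2k)$ pushes the remaining factor into $a_k$. Everything else follows immediately from Lemma 3.12.
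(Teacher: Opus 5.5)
Your proof is correct and is the argument the paper has in mind: the paper only says that (3.12) ``clearly implies'' the corollary, and you supply the omitted details (disposing of $k=1$ via $a_1=-n2^{n-3}$, then using $n\mid a_k(n-2k)$ together with $\gcd(n,n-2k)=\gcd(n,k)$ to force the complementary prime into $a_k$, or, when $n=p^2$, the second factor of $p$). One hypothesis should be stated explicitly: $p,q$ must be odd, as in the lemma you invoke, since for $n=6$ the non-vanishing $a_3=-1$ gives $\gcd(a_3,6)\gcd(3,6)=3\neq 6$.
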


For example take a convenient composite number like $n=77=7 \cdot Eleven$, we have
$$T_{77}(x)-x^{77} \mod 77 =8x^{77} + 33x^{63} + 42x^{55} + 44x^{49} + 56x^{33} + 66x^{21} + 70x^{11} + 66x^7$$
so every non-leading term factor $n=gcd(k,n)gcd(a_k,n)$.

\bibliographystyle{amsplain}

\end{document}